\newtheorem{lemma}{Lemma}
\newtheorem{proposition}[lemma]{Proposition}
\newtheorem{corollary}[lemma]{Corollary}
\newtheorem{theorem}[lemma]{Theorem}
\newcommand{\be}[1]{\begin{equation}\label{#1}}
\newcommand{\ee}{\end{equation}}
\newcommand{\R}{\mathbb{R}}
\def\cprime{$'$}
\newcommand{\nnrm}[1]{|\kern-1pt|\kern-1pt|#1|\kern-1pt|\kern-1pt|}
\newcommand{\coupling}{\kappa}
\begin{document}
\title[Uniqueness results in supercritical equations]
{Non-existence and uniqueness results for supercritical semilinear elliptic equations}

\author[J. Dolbeault]{Jean Dolbeault}
\address{Ceremade (UMR CNRS no. 7534), Universit\'e Paris-Dauphine, Place de Lattre de Tassigny, 75775 Paris C\'edex~16, France.}
\email{dolbeaul@ceremade.dauphine.fr}
\author[R. Sta\'nczy]{Robert Sta\'nczy}
\address{Instytut Matematyczny, Uniwersytet Wroc\l awski, pl. Grunwaldzki 2/4, 50-384 Wroc\l aw, Poland.}
\email{stanczr@math.uni.wroc.pl}

\subjclass{Primary: 35A05; Secondary: 35J60, 35J65}

%

\keywords{Semi-linear elliptic equation; uniqueness; Gelfand's problem; Poho\-\v{z}aev's method; Rellich-Poho\v{z}aev identities; Brezis-Nirenberg problem; bifurcation; branches of solutions; non-existence results; critical explosion parameter; nonlinear eigenvalue problem; non-local constraints}

\date{January 1st, 2009}

\begin{abstract}
Non-existence and uniqueness results are proved for several local and non-local supercritical bifurcation problems involving a semilinear elliptic equation depending on a parameter. The domain is star-shaped and such that a Poincar\'e inequality holds but no other symmetry assumption is required. Uniqueness holds when the bifurcation parameter is in a certain range. Our approach can be seen, in some cases, as an extension of non-existence results for non-trivial solutions. It is based on Rellich-Poho\-\v{z}aev type estimates. Semilinear elliptic equations naturally arise in many applications, for instance in astrophysics, hydrodynamics or thermodynamics. We simplify the proof of earlier results by K. Schmitt and R.~Schaaf in the so-called local multiplicative case, extend them to the case of a non-local dependence on the bifurcation parameter and to the additive case, both in local and non-local settings.
\end{abstract}

\maketitle


\section{Introduction}

This paper is devoted to non-existence and uniqueness results for various supercritical semilinear elliptic equations depending on a bifurcation parameter, in a star-shaped domain in $\R^d$. We shall distinguish the \emph{multiplicative case\/} when the equation can be written as
\be{Eqn:MultLoc}
\Delta u +\lambda\,f(u)=0
\ee
and the \emph{additive case\/} for which the equation is
\be{Eqn:AddLoc}
\Delta u+f(u+\mu)=0\;.
\ee
We shall also distinguish two sub-cases for each equation: the \emph{local case\/} when $\lambda$ and $\mu$ are the bifurcation parameters, and the \emph{non-local case\/} when $\lambda$ and $\mu$ are determined by a non-local condition, respectively $$
\lambda\int_\Omega f(u)\;dx=\coupling
$$ and
$$
\int_\Omega f(u+\mu)\;dx=M.
$$
In the \emph{multiplicative non-local} case, the equation is
\be{Eqn:MultNonLoc}
\Delta u +\,\coupling\,\frac{f(u)}{\int_\Omega f(u)\;dx}=0\;.
\ee
In many applications, the term $f(u)/\int_\Omega f(u)\,dx$ is interpreted as a probability measure and $\coupling$ is a coupling parameter. Such a parameter arises from physical constants after a proper adimensionalization. In the \emph{additive non-local} case, the problem to solve is \be{Non-a-Mno}
\Delta u + f(u+\mu)=0\;,\quad M=\int_\Omega f(u+\mu)\;dx\;.
\ee
The parameter $M$ is typically a mass and, in a variational setting, $\mu$ can be interpreted as a Lagrange multiplier associated with the mass constraint, that is, a chemical potential from the point of view of physics.
We shall consider the four problems, \eqref{Eqn:MultLoc}, \eqref{Eqn:AddLoc}, \eqref{Eqn:MultNonLoc} and \eqref{Non-a-Mno}, and prove that if the domain $\Omega$ is star-shaped, with boundary $\partial\Omega$ in $C^{2,\gamma}$, $\gamma\in (0,1)$, and if $f$ is a non-decreasing nonlinearity with supercritical growth at infinity, such that $f(0)>0$ in the case of \eqref{Eqn:MultLoc} or \eqref{Eqn:MultNonLoc}, or such that $f>0$ on $(\bar\mu,\infty)$ and $\lim_{\mu\to\bar\mu}f(\mu)=0$ for some $\bar\mu\in[-\infty,\infty)$ in the case of \eqref{Eqn:AddLoc} or \eqref{Non-a-Mno}, then solutions are unique in $L^\infty\cap H^1_0(\Omega)$ in a certain range of the parameters $\lambda$, $\mu$, $\coupling$ or $M$, while no solution exists for large enough values of the same parameters. Typical nonlinearities are the exponential function $f(u)=e^u$ and the power law nonlinearity $f(u)=(1+u)^p$, for some $p>(d+2)/(d-2)$, $d\geq 3$. In the exponential case, \eqref{Eqn:MultLoc} is the well known \emph{Gelfand equation, cf.\/} \cite{MR0153960}.

Our approach is based on Poho\v{z}aev's estimate, see \cite{MR0192184}, which is obtained by multiplying the equations by $(x\cdot\nabla u)$, integrating over $\Omega$ and then integrating by parts. Also see \cite{MR0002456} for an earlier result based on the local dilation invariance in a linear setting. In this paper, we shall only consider solutions in $L^\infty\cap H^1_0(\Omega)$, which are therefore classical solutions, so that multiplying the equation by $u$ or by $(x\cdot\nabla u)$ is allowed. Some results can be extended to the $H^1_0(\Omega)$ framework, but some care is then required.

\medskip This paper is organized as follows. In Section~\ref{Sec:Mult}, we consider the multiplicative local and non-local bifurcation problems, respectively \eqref{Eqn:MultLoc} and \eqref{Eqn:MultNonLoc}. In Section~\ref{Sec:Add}, we study the additive local and non-local bifurcation problems, respectively \eqref{Eqn:AddLoc} and \eqref{Non-a-Mno}. In all cases, we establish non-existence and uniqueness results, and give some indications on how to construct the branches of solutions, although this is not our main purpose.

\medskip Before giving the details of our results, let us give a brief review of the literature. Concerning \eqref{Eqn:MultLoc}, we primarily refer to the contributions of K. Schmitt in \cite{MR1368678} and R. Schaaf in \cite{MR1785673}, which cover even more general cases than ours and will be discussed more thoroughly later in this section.

The parameter $\lambda$ in \eqref{Eqn:MultLoc} can be seen as a bifurcation parameter. Equation~\eqref{Eqn:MultLoc} is sometimes called a \emph{nonlinear eigenvalue problem.\/} It is well known that for certain values of $\lambda$, multiplicity of solutions can occur, see for instance \cite{MR0340701}. In some cases there are infinitely many positive solutions, even in the radial case, when $\Omega$ is a ball. Radial solutions have been intensively studied. We refer for instance to~\cite{MR1799043} for a review of problems with \emph{positone\/} structure, \emph{i.e.\/} for which $f(0)<0$ and $f$ changes sign once on $\R^+$. A detailed analysis of bifurcation diagrams can be found in \cite{MR1625731,MR1721723}. Also see \cite{MR678562} for earlier and more qualitative results. Positive bounded solutions of such a nonlinear scalar field equation are often called \emph{ground states\/} and can be characterized in many problems as minimizers of a semi-bounded coercive energy functional. They are relevant in many cases of practical interest in physics, chemistry, mathematical biology, \emph{etc.}

When $\Omega$ is a ball, all bounded positive solutions are radial under rather weak conditions on the nonlinearity~$f$, according to \cite{MR544879} and subsequent papers. Lots of efforts have been devoted to uniqueness issues for the solutions of the corresponding ODE and slightly more general problems like quasilinear elliptic ones, see, \emph{e.g.,\/} \cite{MR1462272}. Several other results also cover the case $\Omega=\R^d$, see \cite{MR1803216}. There are also numerous papers in case of more general nonlinearities, including, for instance, functions of $x$, $u$, and $\nabla u$, or more general bifurcation problems than the ones considered in this paper. It is out of the scope of this introduction to review all of them. In a ball, the set of bounded solutions can often be parametrized. The corresponding bifurcation diagrams have the following properties. For nonlinearities with subcritical growth, for instance for $f(u)=(1+u)^p$, $p<(d+2)/(d-2)$, $d\geq 3$, multiple positive solutions may exist when $\lambda$ is positive, small, while for supercritical growths, for example $f(u)=(1+u)^p$ with $p> (d+2)/(d-2)$, $d\geq 3$, or $f(u)=e^u$ and $d=3$, there is one branch of positive solutions which oscillates around some positive, limiting value of $\lambda$ and solutions are unique only for $\lambda$ positive, small. See \cite{MR1793200,DDM,DF,MR678562,MR1625731,MR1721723,MR1186683} for more details.

Another well known fact is that, at least for star-shaped domains, \emph{Poho\-\v{z}aev's method\/} allows to discriminate between super- and subcri\-ti\-cal regimes. This approach has been used mostly to prove the non-existence of non-trivial solutions, see \cite{MR709644,MR855181,MR0333442}, and \cite{MR0192184,MR0002456} for historical references. Such a method is for instance at the basis of the result of \cite{MR709644} on the \emph{Brezis-Nirenberg problem\/}. Also see \cite{MR1793200} and references therein for more details. The identity in \emph{Poho\v{z}aev's method\/} amounts to consider the effect of a dilation on an energy associated to the solution and therefore carries some important information on the problem, see, \emph{e.g.,\/} \cite{MR2437030}. In this context, stereographic projection and connections between euclidean spaces and spheres are natural, as was already noted in \cite{MR1878530} by C.~Bandle and R.~Benguria.

\medskip In this paper we are going to study first the regime corresponding to $\lambda$ small and show that \emph{Poho\v{z}aev's method\/} provides a \emph{uniqueness result} also in cases for which a non-trivial solution exists. The existence of a branch of positive solutions of~\eqref{Eqn:MultLoc} is a widely studied issue, see for instance~\cite{MR0382848,MR0301587}. Also see \cite{MR1368678} for a review, and references therein. As already said, our two basic examples are based on the power law case, $f(u)=(1+u)^p$, and the exponential nonlinearity, $f(u)=e^u$, for which useful informations and additional references can be found in \cite{DDM,MR0340701,MR1648249,MR2093755,MR2135150}. We shall also consider a third example, with a nonlinearity corresponding to the case of \emph{Fermi-Dirac statistics,\/} which behaves like a power law for large, positive values of $u$, and like an exponential function for large, negative values~of~$u$.

The functional framework of bounded solutions and a bootstrap argument imply that we work with classical solutions. Apart from the condition that the domain is star-shaped and satisfies the Poincar\'e inequality, \emph{e.g.,\/} is bounded in one direction, we will assume no other geometrical condition. In the local multiplicative case, several uniqueness results are known for small $\lambda>0$, including in the case of Gelfand's equation, see \cite{MR2010965,MR1785673,MR1368678}. One should note that in the framework of the larger space $H_0^1(\Omega)$, if the boundedness assumption is relaxed, it is not even known if all solutions are radial when $\Omega$ is a ball. The results of~\cite{MR544879} and subsequent papers almost always rely on the assumption that the solutions are continuous or at least bounded on $\overline\Omega$. Notice that, according to \cite{Matano,MR1697245}, even for a ball, it is possible to prescribe a given isolated singularity which is not centered. In \cite{MR1697245}, the case of our two basic examples, $f(u)=e^u$ and $f(u)=(1+u)^p$, with $\frac{d+2}{d-2}<p<\frac{d+1}{d-3}$, $d>3$, has been studied and then generalized to several singularities in \cite{MR1697050}. Also see \cite{MR1071634} for an earlier result. These singularities are in $H^1_0(\Omega)$ and, for a given value of a parameter $\lambda$ set apart from zero, they are located at an \emph{a priori\/} given set of points. Similar problems on manifolds were considered in \cite{MR1134481}.

We refer to \cite{MR1046707,MR958783} for bounds on the solutions to Gelfand's problem, which have been established earlier than uniqueness results but are actually a key tool. Also see \cite{MR1791174} for a more recent contribution. Concerning the uniqueness of the solutions to Gelfand's problem for $d\ge 3$ and $\lambda>0$, small, we refer to \cite{MR2010965,MR1785673,MR1368678}. In the case of a ball, this is even known since the paper of D.D. Joseph and T.S. Lundgren, \cite{MR0340701}, when combined with the symmetry result of \cite{MR544879}.

\medskip The \emph{local multiplicative case\/} corresponding to Problem \eqref{Eqn:MultLoc} is the subject of Section~\ref{Sec:Mult-Local}. The literature on such semilinear elliptic problems and associated biffurcation problems is huge. The results of non-existence of non-trivial solutions are well known, see \cite{MR1780642,MR855181,MR1785673} and references therein. Also see \cite{MR1652967} for extension to systems. Concerning the uniqueness result on non-trivial solutions, the method was apparently discovered independently by several people including F. Mignot and J.-P. Puel, and X. Cabr\'e and P. Majer, \cite{Cabre-Majer}, but it seems that the first published reference on uniqueness results by Rellich-Poho\v{z}aev type estimates is due to K. Schmitt \cite{MR1368678} and later, to R. Schaaf \cite{MR1785673}. A more general result for the multiplicative case has been obtained in \cite{BS-Kyoto} to the price of more intricate reasoning. Numerous papers have been devoted to the understanding of the role of the geometry and they extend the standard results, mostly the non-existence results, to the case of non strictly star-shaped domains: see for instance \cite{MR1780642,MR855181,MR1785673} and several papers of J. McGough \emph{et al.}, see \cite{MR2010965,MR2135243,MR1791174}, which are, as far as we know, the most up-to-date results on such issues.

\medskip As already mentioned above, Problem \eqref{Eqn:MultLoc} has been studied by K.~Schmitt in \cite{MR1368678} and R. Schaaf in \cite{MR1785673}. In \cite[Theorem 2.6.7]{MR1368678}, it is proved that if one replaces $f(u)$ in \eqref{Eqn:MultLoc} by a more general function $f(x,u)$ in $C^2(\overline\Omega\times\R^+)$ satisfying
\begin{eqnarray*}
&&(i)\, f(x,u)>0, \; f_u(x,u)>0, \; u\geq 0, \; x\in \overline{\Omega}\;,\\
&&(ii)\, \limsup_{u\rightarrow\infty} \sup_{x\in \overline{\Omega}} \frac{2\,d\,F(x,u)}{(d-2)\,u\,f(x,u)}<1\;, \\
&&(iii)\, \left[ \nabla_x F(x,u+1)-\nabla_x F(x,u)-u\,\nabla_x f(x,u) \right] \cdot x \leq 0\quad\mbox{for}\; u\gg 1\,,\; x\in\overline\Omega\;,
\end{eqnarray*}
then uniqueness holds for a star-shaped domain $\Omega$. A survey on the existence and continuation results for linear and superlinear (sub- and supercritical) growth of the nonlinear term $f$ in \eqref{Eqn:MultLoc} can also be found in \cite{MR1368678}, as well as a study of the influence of the geometry, topology and dimension of the domain, which is of interest for our purpose.

In \cite{MR1785673}, R. Schaaf studies uniqueness results for the semilinear elliptic problem~\eqref{Eqn:MultLoc} under the asymptotic condition $\limsup_{u\rightarrow \infty} \tfrac{F(u)}{u\,f(u)}< \tfrac{1}{2}-M(\Omega)$ where $M(\Omega)=1/d$ for star-shaped domains. In general $M(\Omega)$ is some number in the interval $(0,1/d]$. In the autonomous case, the above asymptotic condition is equivalent to the assumption (ii) made by K. Schmitt in \cite{MR1368678} or to our assumption~\eqref{Sc1}, to be found below. Our contribution to the question of the uniqueness for~\eqref{Eqn:MultLoc} relies on a simplification of the proof in \cite{MR1785673,MR1368678}.

\medskip Imposing a \emph{non-local constraint} dramatically changes the picture. For instance, in case of Maxwell-Boltzmann statistics, $f(u)=e^u$, in a ball of $\R^2$, the solution of \eqref{Eqn:MultLoc} has two solutions for any $\lambda\in (0,\lambda_*)$ and no solution for $\lambda>\lambda_*$, while uniqueness holds in \eqref{Eqn:MultNonLoc} in terms of $M$, for any $M$ for which a solution exists, see \cite{MR2043943,MR0340701}. Non-local constraints are motivated by considerations arising from physics. In the case of the exponential nonlinearity with a mass normalization constraint, a considerable effort has been done in the two-dimensional case for understanding the statistical properties of the so-called \emph{Onsager solutions} of the Euler equation, see \cite{MR1145596,MR1362165,MR2068307}. The same model, but rather in dimension $d=3$, is relevant in astrophysical models for systems of gravitating particles, see~\cite{BS-Kyoto}. Other standard examples are the polytropic distributions, with $f(u)=u^p$, and Bose-Einstein or Fermi-Dirac distributions which result in nonlinearities involving special functions. Existence and non-existence results were obtained for instance in \cite{MR2043943} and \cite{MR2134461,MR2136979}, respectively for Maxwell-Boltzmann and Fermi-Dirac statistics.

An evolution model compatible with Fermi-Dirac statistics and the convergence of its solutions towards steady states has been thoroughly examined in \cite{MR2099972}, while the steady state problem was considered by R. Sta\'nczy in \cite{MR2134461, MR2136979,Stanczy08}. See~\cite{MR1934935} and references therein for a model improved with respect to thermodynamics, \cite{MR2136979} and references therein for more elaborate models, and \cite{CSR} for a derivation of an evolution equation involving a mean field term, which also provides a relevant, stationary model studied in \cite{BS-Kyoto, MR2295189}. Also see \cite{MR2092680, DMOS} for an alternative, phenomenological derivation of drift-diffusion equations and their stationary counterparts, and \cite{Stanczy08} for the existence of radial solutions by fixed point methods in weighted function spaces, under nonlocal constraints. The case of a decoupled, external potential goes back to the work of Smoluchowski, see \cite{MR1781665,Smo}. For this reason, the evolution model is often referred to as the \emph{Smoluchowski--Poisson equation.}

Our purpose is not to study the above mentioned evolution equations, but only to emphasize that for the corresponding steady states, non-local constraints are very natural, since they correspond to quantities which are conserved along evolution. Hence, to identify the asymptotic state of the solutions to the evolution equation, we have to solve a semilinear elliptic equation with a non-local constraint, which corresponds, for instance, to mass conservation.

\section{The multiplicative case}\label{Sec:Mult}

\subsection{The local bifurcation problem}\label{Sec:Mult-Local}

We consider Problem~\eqref{Eqn:MultLoc} on a domain $\Omega$ in $\R^d$. Our first assumption is the geometrical condition that a Poincar\'e inequality holds:
\be{Ineq:Poincare}
\int_\Omega|u|^2\,dx\leq C_{\rm P}\int_\Omega|\nabla u|^2\,dx
\ee
for any $u\in H^1_0(\Omega)$ and some positive constant $C_{\rm P}>0$. Such an inequality holds for instance if $\Omega$ is bounded in one direction. See \cite[Proposition 2.1]{MR1680893} for more details. Inequality \eqref{Ineq:Poincare} is called \emph{Friedrichs' inequality} in some areas of analysis (see \cite{MR1512401,Poincare1887} for historical references; we also refer to \cite{Jakszto}). We shall further require that
\be{PosOptFn}
\exists\;u\in H^1_0(\Omega)\;\mbox{\sl such that}\quad u>0\quad\mbox{\sl and}\quad\int_\Omega|u|^2\,dx=C_{\rm P}\int_\Omega|\nabla u|^2\,dx\;.
\ee
This is straightforward in some cases, for instance if $\Omega$ is bounded, simply connected, with a Lipschitz boundary, or if $\Omega$ is unbounded, simply connected and such that the embedding $H^1_0(\Omega)\hookrightarrow L^2(\Omega)$ is compact. For such a compactness property, see for instance \cite[Theorem 2.8]{MR0312241} and \cite[Theorems 6.16 and 6.19]{MR2424078}.

The goal of this section is to state a non-existence result for large values of~$\lambda$ and give sufficient conditions on $f\geq 0$ such that, for some $\lambda_0>0$, Equation~\eqref{Eqn:MultLoc} has a unique solution in $L^\infty\cap H^1_0(\Omega)$ for any $\lambda\in(0,\lambda_0)$. We assume that $f$ is of class $C^2$. By standard elliptic bootstraping arguments, a bounded solution is then a classical one.

Next we assume that {\em for some $\lambda_*>0$, there exists a branch of positive minimal solutions $(\lambda,u_\lambda)_{\lambda\in(0,\lambda_*)}$ originating from~$(0,0)$ and such that}
\be{H1}
\lim_{\lambda\to 0_+}\left(\|u_\lambda\|_{L^\infty(\Omega)}+\|\nabla u_\lambda\|_{L^\infty(\Omega)}\right)=0\;.
\ee
Sufficient conditions for such a property to hold can be found in various papers. We can for instance quote the following result.
\begin{lemma}\label{Lem:existence} Assume that $\Omega$ is bounded with smooth, {\rm i.e.} $C^{2,\gamma}$ for some $\gamma\in (0,1)$, boundary, $f\in C^2$ is positive on $[0,\infty)$ and $\inf_{u>0}f(u)/u>0$. Then \eqref{H1} holds. \end{lemma}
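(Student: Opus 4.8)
The plan is to establish the existence of a branch of positive minimal solutions bifurcating from $(0,0)$ and to prove the uniform decay estimate \eqref{H1} by combining the method of sub- and super-solutions with elliptic regularity. First I would use the hypothesis $\inf_{u>0}f(u)/u =: m > 0$ together with the positivity of $f$ to produce, for each small $\lambda>0$, a solution $u_\lambda$ of \eqref{Eqn:MultLoc}. The natural super-solution is a small constant or a multiple of the solution $\varphi_1>0$ of $-\Delta\varphi_1=\lambda_1\varphi_1$ with Dirichlet conditions (which exists since $\Omega$ is bounded and smooth), while $0$ is a sub-solution because $f(0)>0$ (indeed $f>0$ on $[0,\infty)$ forces $f(0)>0$). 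One must be slightly careful: $f(0)>0$ means $0$ is a strict sub-solution, so the monotone iteration scheme starting from $0$ converges increasingly to the minimal positive solution $u_\lambda$, provided a super-solution exists. Constructing the super-solution is where the condition $\inf_{u>0}f(u)/u>0$ — or rather its converse, an upper linear bound near the relevant range — enters; more precisely, by continuity and the $C^2$ assumption, $f$ is bounded on $[0,R]$ for each $R$, say $f\le C_R$ on $[0,R]$, so $\bar u = \lambda\,C_R\,w$, where $-\Delta w = 1$ in $\Omega$, $w=0$ on $\partial\Omega$, is a super-solution as long as $\lambda\,C_R\|w\|_\infty\le R$; choosing $R=1$ and then $\lambda_*$ small enough so that $\lambda_*\,C_1\|w\|_\infty\le 1$ gives a super-solution for all $\lambda\in(0,\lambda_*)$.

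Having the minimal solution $u_\lambda$ trapped between $0$ and $\lambda\,C_1\,w$, the $L^\infty$ bound $\|u_\lambda\|_{L^\infty(\Omega)}\le \lambda\,C_1\|w\|_{L^\infty(\Omega)}\to 0$ is immediate. For the gradient bound I would invoke elliptic regularity: $u_\lambda$ solves $-\Delta u_\lambda = \lambda f(u_\lambda)$ with right-hand side bounded in $L^\infty(\Omega)$ by $\lambda C_1$ (since $0\le u_\lambda\le 1$), so by $W^{2,q}$ estimates for all $q<\infty$ and the Sobolev embedding $W^{2,q}\hookrightarrow C^{1,\alpha}$ for $q>d$, together with $C^{2,\gamma}$ regularity of $\partial\Omega$, one gets $\|u_\lambda\|_{C^{1,\alpha}(\overline\Omega)}\le C\,\lambda\,C_1\to 0$ as $\lambda\to 0_+$. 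In particular $\|\nabla u_\lambda\|_{L^\infty(\Omega)}\to 0$, which completes \eqref{H1}. The fact that the family is a genuine branch originating from $(0,0)$, and that $\lambda\mapsto u_\lambda$ is the minimal positive solution for each $\lambda$, follows from the monotone iteration construction and a standard comparison argument: any other positive solution lies above the iterates, hence above $u_\lambda$.

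The main obstacle, I expect, is organizing the super-solution construction so that a single $\lambda_*$ works uniformly on the whole interval $(0,\lambda_*)$ and so that the resulting solutions form a connected, monotone-in-$\lambda$ branch rather than a mere collection of solutions for individual values of $\lambda$; this requires checking that the minimal solution is nondecreasing in $\lambda$ (again by sub/super-solution comparison, using $f>0$) and that it depends continuously on $\lambda$, which can be deduced from the $C^{1,\alpha}$ bound plus uniqueness of the minimal solution. A secondary technical point is that the hypothesis is stated as $\inf_{u>0} f(u)/u>0$, a lower linear bound, whereas the super-solution construction above really uses local boundedness of $f$ near $0$; the lower bound is what prevents the branch from being continued indefinitely and pins down $\lambda_*$ as finite, and it also guarantees, via the Poincaré/Dirichlet eigenvalue, that no positive solution exists for $\lambda$ above some threshold — but for \eqref{H1} alone the essential ingredient is the $C^2$ regularity giving boundedness of $f$ on $[0,1]$, combined with elliptic estimates. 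Everything else is routine.
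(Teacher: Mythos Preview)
Your argument is correct and is precisely the standard route. The paper does not actually give a proof of this lemma: it simply writes ``We refer for instance to \cite{MR1368678} for a proof'' and adds that the solutions satisfying \eqref{H1} ``can be characterized as a branch of minimal solutions, using sub- and super-solutions.'' Your construction --- $0$ as strict sub-solution, $\lambda C_1 w$ with $-\Delta w=1$ as super-solution, monotone iteration to the minimal solution, then $W^{2,q}$/$C^{1,\alpha}$ elliptic estimates to control $\nabla u_\lambda$ --- is exactly the kind of argument those references contain, so there is nothing to compare.

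Two minor remarks. First, since $f$ is not assumed monotone, the iteration should, strictly speaking, be set up as $(-\Delta+K)u_{n+1}=\lambda f(u_n)+Ku_n$ with $K\ge\lambda\sup_{[0,1]}|f'|$, so that the right-hand side is nondecreasing on $[0,1]$; this is routine and you clearly have it in mind. Second, your observation that the hypothesis $\inf_{u>0}f(u)/u>0$ is not actually used to obtain \eqref{H1} --- it rather serves to cap $\lambda_*$ from above via Proposition~\ref{Prop:Explosion} --- is accurate; the lemma would hold without it.
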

We refer for instance to \cite{MR1368678} for a proof. The solutions satisfying \eqref{H1} can be characterized as a branch of minimal solutions, using sub- and super-solutions. Although this is standard, for the sake of completeness let us state a non-existence result for values of the parameter $\lambda$ large enough.
\begin{proposition}\label{Prop:Explosion} Assume that \eqref{Ineq:Poincare} and \eqref{PosOptFn} hold. If $\Lambda:=\inf_{u>0}f(u)/u>0$, then there exists $\lambda_*>0$ such that \eqref{Eqn:MultLoc} has no non-trivial nonnegative solution in $H^1_0(\Omega)$ if $\lambda>\lambda_*$. \end{proposition}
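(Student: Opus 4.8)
The plan is to combine the variational characterization of the optimal Poincaré constant in \eqref{PosOptFn} with the lower bound $f(u)\geq \Lambda\,u$ coming from the hypothesis $\Lambda=\inf_{u>0}f(u)/u>0$. Suppose $u\in H^1_0(\Omega)$ is a non-trivial nonnegative solution of \eqref{Eqn:MultLoc}. First I would multiply \eqref{Eqn:MultLoc} by $u$ and integrate by parts over $\Omega$ (this is legitimate since $u\in H^1_0(\Omega)$ and, by elliptic regularity together with $f\in C^2$, the solution is in fact classical), which gives
\be{Eq:EnergyIdentity}
\int_\Omega|\nabla u|^2\,dx=\lambda\int_\Omega u\,f(u)\,dx\geq\lambda\,\Lambda\int_\Omega u^2\,dx\,.
\ee
On the other hand, the Poincaré inequality \eqref{Ineq:Poincare} yields $\int_\Omega u^2\,dx\leq C_{\rm P}\int_\Omega|\nabla u|^2\,dx$. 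Chaining these two inequalities forces $\lambda\,\Lambda\,C_{\rm P}\leq 1$, so no non-trivial nonnegative solution can exist once $\lambda>\lambda_*:=1/(\Lambda\,C_{\rm P})$.

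Strictly speaking this elementary argument already proves the statement with $\lambda_*=1/(\Lambda\,C_{\rm P})$, and \eqref{PosOptFn} is not even needed for this crude conclusion. Where \eqref{PosOptFn} enters is if one wants to push the argument to the borderline and rule out solutions also at $\lambda=\lambda_*$, or more importantly to see that the chain of inequalities is in fact strict: since $f(0)>0$ in the multiplicative case, one has $u\,f(u)>\Lambda\,u^2$ on a set of positive measure (the inequality $f(u)\geq\Lambda u$ cannot be an equality identically because $f(0)>0$ while $\Lambda\cdot 0=0$), so \eqref{Eq:EnergyIdentity} is strict and hence $\lambda\,\Lambda\,C_{\rm P}<1$ strictly. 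This shows non-existence for all $\lambda\geq\lambda_*$. The role of \eqref{PosOptFn} is really to guarantee that $\lambda_*$ defined this way is the sharp threshold compatible with the existence branch of Lemma~\ref{Lem:existence}, i.e. that it matches the optimal constant in the Poincaré inequality realized by an actual positive function.

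There is no serious obstacle here; the only point requiring a modicum of care is the justification that a nonnegative $H^1_0(\Omega)$ solution may be multiplied by $u$ and the integration by parts performed without boundary terms — this follows from $u\in H^1_0(\Omega)$ for the boundary term and from the bootstrap/regularity remark made just before the statement for classical smoothness — and the observation that $u\not\equiv 0$ together with $f>0$ makes the right-hand side of \eqref{Eq:EnergyIdentity} strictly positive, so that $\int_\Omega|\nabla u|^2\,dx>0$ and the division leading to $\lambda\Lambda C_{\rm P}\leq 1$ is valid. The whole proof is three lines once these trivialities are dispatched; the substance of the paper lies elsewhere, in the uniqueness results obtained by Rellich-Pohožaev estimates, not in this non-existence fact.
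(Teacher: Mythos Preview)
Your argument is correct and yields the same threshold $\lambda_*=1/(\Lambda\,C_{\rm P})$ as the paper, but the route differs. The paper does not test \eqref{Eqn:MultLoc} against the solution $u$ itself; instead it tests against a \emph{positive} first eigenfunction $\varphi_1$ of $-\Delta$ in $H^1_0(\Omega)$, obtaining
\[
\lambda_1\int_\Omega u\,\varphi_1\,dx=\int_\Omega\nabla u\cdot\nabla\varphi_1\,dx=\lambda\int_\Omega f(u)\,\varphi_1\,dx\ge\Lambda\,\lambda\int_\Omega u\,\varphi_1\,dx
\]
with $\lambda_1=1/C_{\rm P}$, whence $\lambda\le\lambda_1/\Lambda$. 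This is precisely where hypothesis \eqref{PosOptFn} enters in the paper: it guarantees the existence of a \emph{positive} minimizer $\varphi_1$, so that $\int_\Omega u\,\varphi_1\,dx>0$ and the division is legitimate. Your speculation about \eqref{PosOptFn} serving to sharpen the threshold or handle the borderline case is therefore off the mark; its role in the original proof is structural, not quantitative.

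As to what each approach buys: yours is slightly leaner on hypotheses (you use only \eqref{Ineq:Poincare}, not \eqref{PosOptFn}), but it requires that $u\,f(u)\in L^1(\Omega)$ to make sense of the energy identity, which for a bare $H^1_0(\Omega)$ solution with a supercritical $f$ is not automatic without the $L^\infty$ assumption you invoke via bootstrap. The paper's test function $\varphi_1$ is bounded, so $f(u)\,\varphi_1\in L^1$ is closer to the weak formulation and marginally more robust for solutions that are only in $H^1_0(\Omega)$, which is how the proposition is stated.
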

\noindent
The lowest possible value of $\lambda_*$ is usually called the \emph{critical explosion parameter.\/} \medskip

\begin{proof} Let $\varphi_1$ be a positive eigenfunction associated with the first eigenvalue $\lambda_1=1/C_{\rm P}$ of $-\Delta$ in $H^1_0(\Omega)$:
\[
-\Delta\varphi_1=\lambda_1\,\varphi_1\;.
\]
By multiplying this equation by $u$ and \eqref{Eqn:MultLoc} by $\varphi_1$, we get
\[
\lambda_1\int_\Omega u\,\varphi_1\;dx=\int_\Omega \nabla u\cdot\nabla\varphi_1\;dx=\lambda \int_\Omega f(u)\,\varphi_1\;dx\geq \Lambda\,\lambda\int_\Omega u\,\varphi_1\;dx\;,
\]
thus proving that there are no non-trivial nonnegative solutions if $\lambda>\lambda_1/ \Lambda$. \end{proof}

Next we present a simplified version of the proof of a uniqueness result stated in \cite{MR1785673}, under slightly more restrictive hypotheses. We assume that $d\geq 3$ and that~$f$ has a supercritical growth at infinity, \emph{i.e.,\/} $f$ is such that
\be{Sc1}
\limsup_{u\rightarrow\infty}\frac{F(u)}{u\,f(u)}=\eta<\frac{d-2}{2\,d}\,,
\ee
where $F(u):=\int_0^uf(s)\,ds$. Notice that, in Proposition~\ref{Prop:Explosion}, $ \Lambda>0$ if \eqref{Sc1} holds and if we assume that $f$ is positive.
\begin{theorem}\label{Thm:Multi0} Assume that $\Omega$ is a bounded star-shaped domain in $\R^d, d\ge 3$, with $C^{2,\gamma}$ boundary, such that \eqref{Ineq:Poincare} holds for some \hbox{$C_{\rm P}\!>\!0$}. If $f(z)$ is positive for large values of $z$, of class $C^2$ and satisfies \eqref{H1} and \eqref{Sc1}, then there exists a positive constant $\lambda_0$ such that Equation~\eqref{Eqn:MultLoc} has at most one solution in $L^\infty\cap H^1_0(\Omega)$ for any $\lambda\in(0,\lambda_0)$. \end{theorem}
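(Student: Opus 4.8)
The plan is to argue by contradiction: suppose that for a sequence $\lambda_n\to 0_+$ there exist two distinct solutions $u_n$ and $v_n$ of \eqref{Eqn:MultLoc} in $L^\infty\cap H^1_0(\Omega)$, and derive an incompatibility with the supercriticality condition \eqref{Sc1}. The natural choice is to take $u_n=u_{\lambda_n}$ to be the minimal solution furnished by \eqref{H1}, so that $\|u_n\|_{L^\infty}+\|\nabla u_n\|_{L^\infty}\to 0$, and $v_n$ a second, distinct solution. The first step is to obtain a lower bound away from zero on $v_n$: since $v_n$ is positive, a super-solution/maximum-principle comparison shows $v_n\ge u_n$; moreover $v_n$ cannot also tend to zero in $L^\infty$, for otherwise a linearization argument (using that $-\Delta$ has first eigenvalue $\lambda_1=1/C_{\rm P}$ and $\lambda_n f'(0)\to 0$) would force $v_n\equiv u_n$ by local uniqueness near the trivial branch. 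So, after normalization, $v_n$ carries a fixed amount of mass or energy.

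The heart of the matter is the Rellich--Poho\v{z}aev identity. Multiplying \eqref{Eqn:MultLoc} by $x\cdot\nabla v_n$ and integrating by parts over the star-shaped domain $\Omega$ gives
\be{PohozaevSketch}
\frac{d-2}{2}\int_\Omega|\nabla v_n|^2\,dx-d\,\lambda_n\int_\Omega F(v_n)\,dx=-\frac12\int_{\partial\Omega}|\partial_\nu v_n|^2\,(x\cdot\nu)\,d\sigma\le 0\,,
\ee
where the boundary term has the right sign because $x\cdot\nu\ge 0$ on $\partial\Omega$. On the other hand, multiplying \eqref{Eqn:MultLoc} by $v_n$ itself yields $\int_\Omega|\nabla v_n|^2\,dx=\lambda_n\int_\Omega v_n\,f(v_n)\,dx$. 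Substituting this into \eqref{PohozaevSketch} produces
$$
\lambda_n\int_\Omega\Big(\frac{d-2}{2}\,v_n\,f(v_n)-d\,F(v_n)\Big)\,dx\le 0\,.
$$
Now \eqref{Sc1} says that for large $z$ one has $F(z)\le\eta'\,z\,f(z)$ with $\eta<\eta'<\tfrac{d-2}{2d}$, i.e. $\tfrac{d-2}{2}zf(z)-dF(z)\ge(\tfrac{d-2}{2}-d\eta')\,zf(z)>0$ for $z$ large. The plan is to split $\Omega$ into the region where $v_n$ is large and the region where it is bounded; on the large region the integrand is positive and controls $\lambda_n\int v_nf(v_n)$, while on the bounded region the integrand is $O(\lambda_n\,|\Omega|)$ with a constant depending only on $\sup_{[0,C]}|f|$. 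The contradiction will come from pairing this Poho\v{z}aev inequality against the coercivity/Poincar\'e information: $\int|\nabla v_n|^2=\lambda_n\int v_nf(v_n)\ge\lambda_n\Lambda\int v_n^2\ge(\lambda_n\Lambda/C_{\rm P})\|v_n\|_{L^2}^{-?}$... more precisely, one shows that the second solution must have $\lambda_n\int_\Omega v_nf(v_n)\,dx$ bounded below by a positive constant (this is where the "away from the trivial branch" step is used), so the large region genuinely contributes, and then \eqref{Sc1} forces the left-hand side of the Poho\v{z}aev inequality to be strictly positive for $n$ large — contradicting its non-positivity.

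The main obstacle I expect is the careful bookkeeping in the intermediate region $\{v_n\le C\}$ and, more seriously, quantifying the lower bound on the mass of the second solution uniformly in $n$: one must rule out that $v_n$ concentrates or degenerates as $\lambda_n\to0$ in a way that makes $\lambda_n\int v_nf(v_n)\to0$ while still $v_n\ne u_n$. Handling this cleanly is exactly the point where the hypothesis \eqref{H1} (uniform smallness of the minimal branch) and the Poincar\'e inequality \eqref{Ineq:Poincare} enter: they pin the minimal solution near $0$ and force any competitor to sit at a definite distance, so that the supercritical sign in the Poho\v{z}aev identity can be exploited. The regularity hypothesis $f\in C^2$ and $\partial\Omega\in C^{2,\gamma}$ guarantees, via elliptic bootstrap, that all solutions are classical, so that both multiplications (by $v_n$ and by $x\cdot\nabla v_n$) and the boundary integration by parts are legitimate.
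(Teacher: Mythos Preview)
Your approach applies the Poho\v{z}aev identity to the second solution $v_n$ itself, whereas the paper applies it to the \emph{difference} $w:=v_n-u_n$ of the two solutions. This is not a cosmetic distinction; it is exactly the step that makes the paper's argument close, and it is where your sketch has a genuine gap.

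Here is the issue. Your Poho\v{z}aev computation on $v_n$ yields, after splitting into $\{v_n>C\}$ and $\{v_n\le C\}$, the a~priori bound $\int_\Omega v_n f(v_n)\,dx\le K$ with $K$ independent of $n$. Hence $\int_\Omega|\nabla v_n|^2=\lambda_n\int_\Omega v_n f(v_n)\le K\lambda_n\to 0$, so $v_n\to 0$ in $H^1_0$. But your dichotomy needs $\|v_n\|_{L^\infty}\to 0$ in order to invoke local uniqueness near the trivial branch, and smallness in $H^1_0$ does not imply smallness in $L^\infty$ for a supercritical nonlinearity. You acknowledge this (``one must rule out that $v_n$ concentrates\ldots'') and propose that \eqref{H1} and Poincar\'e resolve it, but they do not: \eqref{H1} controls only the minimal solution $u_n$, and Poincar\'e merely converts $H^1_0$-smallness into $L^2$-smallness. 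Bridging $H^1_0\to L^\infty$ here would require a separate a~priori $L^\infty$ estimate, which for supercritical problems is itself a substantial result (cf.\ the references in the paper on bounds for Gelfand's problem) and is not supplied by your argument.

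The paper sidesteps this entirely. Writing the two solutions as $u$ and $u+w$ with $u$ minimal and $w\ge 0$, one applies Poho\v{z}aev to the \emph{difference equation} $\Delta w+\lambda[f(u+w)-f(u)]=0$. The resulting right-hand side involves the second-order Taylor remainders $F(u+w)-F(u)-F'(u)\,w$ and $f(u+w)-f(u)-f'(u)\,w$, together with the supercritical correction $-\eta_1\,w\,[f(u+w)-f(u)]$. The crucial observation is that this combination, divided by $w^2$, is \emph{bounded above uniformly in $w\ge 0$}: for small $w$ by the $C^2$ Taylor expansion, and for large $w$ because the negative supercritical term $-\eta_1\,w\,f(u+w)$ dominates by \eqref{Sc1}. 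One obtains directly
\[
\Big(\tfrac{d-2}{2}-d\,\eta_1\Big)\int_\Omega|\nabla w|^2\,dx \;\le\; d\,\lambda\,H\int_\Omega w^2\,dx\;\le\; d\,\lambda\,H\,C_{\rm P}\int_\Omega|\nabla w|^2\,dx,
\]
which forces $w\equiv 0$ once $\lambda<\lambda_0:=\frac{1}{C_{\rm P}H}\big(\tfrac{d-2}{2d}-\eta_1\big)$. No contradiction argument, no sequences, no $L^\infty$ bootstrap, and an explicit threshold. The moral: put the Poho\v{z}aev multiplier on the equation for the difference, not on the second solution.
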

\begin{proof} We follow the lines of the proof of \cite{MR1785673} with some minor simplifications. Up to a translation, we can assume that $\Omega$ is star-shaped with respect to the origin. Assume that \eqref{Eqn:MultLoc} has two solutions, $u$ and $u+v$. With no restriction, we can assume that $u$ is a minimal solution and satisfies \eqref{H1}. As a consequence, $v$ is nonnegative and satisfies
\be{Eqn:Diff}
\Delta v+\lambda\,\big[f(u+v)-f(u)\big]=0\;.
\ee
If we multiply \eqref{Eqn:Diff} by $v$ and integrate with respect to $x\in\Omega$, we get
\be{Eqn:Energy-v}
\int_\Omega|\nabla v|^2\,dx=\lambda\int_\Omega v\,\big[f(u+v)-f(u)\big]\,dx\;.
\ee
Multiply \eqref{Eqn:Diff} by $x\cdot\nabla v$ and integrate with respect to $x\in\Omega$ to get
\begin{multline}\label{Po1}
\frac{d-2}2\int_\Omega|\nabla v|^2\,dx+\frac 12\int_{\partial\Omega}|\nabla v|^2(x\cdot\nu(x))\,d\sigma\\
=d\,\lambda\int_\Omega\left[F(u+v)-F(u)-F'(u)\,v\right]\,dx \\
+\;\lambda\int_\Omega(x\cdot\nabla u)\left[f(u+v)-f(u)-f'(u)\,v\right]\,dx
\end{multline}
where $d\sigma$ is the measure induced by Lebesgue's measure on $\partial\Omega$. Recall that $F$ is a primitive of $f$ such that $F(0)=0$. Take $\eta_1\in (\eta,(d-2)/(2\,d))$ where~$\eta$ is defined in Assumption \eqref{Sc1}. Since $u=u_\lambda$ is a minimal solution and therefore uniformly small as $\lambda\to 0_+$, for any $\varepsilon>0$, we obtain $|x\cdot\nabla u|\le\varepsilon$ for any $x\in\Omega$, provided $\lambda>0$ is small enough. Define $h_\varepsilon$ by
\begin{eqnarray*}
h_\varepsilon(u,v):=d\,\big[F(u+v)-F(u)-F'(u)\,v\big]+\varepsilon\,\left|f(u+v)-f(u)-f'(u)\,v \right| \!\qquad&&\\
-\;d\,\eta_1\, v\,\big[f(u+v)-f(u)\big]\,.&&
\end{eqnarray*}
Because of the smoothness of $f$ and by Assumption \eqref{Sc1}, the function $h_\varepsilon(u,v)/v^2$ is bounded from above by some constant $H$, uniformly in $\varepsilon>0$, small enough. By the assumption of star-shapedeness of the domain~$\Omega$, $x\cdot\nu(x)\geq 0$ for any $x\in\partial\Omega$. From \eqref{Eqn:Energy-v} and \eqref{Po1}, it follows that
\[
\frac{d-2}2\int_\Omega|\nabla v|^2\,dx\leq d\,\lambda\,H\int_\Omega|v|^2\,dx + d \,\eta_1\int_\Omega|\nabla v|^2\,dx\;.
\]
Due to the Poincar\'e inequality \eqref{Ineq:Poincare}, the condition
\[
\lambda<\frac 1{C_{\rm P}\,H}\,\left(\frac{d-2}{2\,d}-\eta_1\right)
\]
implies $v=0$ and the uniqueness follows. \end{proof}

\medskip\noindent{\bf Examples}
\begin{enumerate}
\item{If $f(u)=e^u$, Condition \eqref{Sc1} is always satisfied. Notice that if $d=2$ and $\Omega$ is a ball, the uniqueness result is wrong, see \cite{MR0340701}.}
\item{If $f(u)=(1+u)^p$, $d\geq 3$, Condition~\eqref{Sc1} holds if and only if $p>\frac{d+2}{d-2}$. Also see \cite{MR0340701} for more details. Similarly in the same range of parameters for $f(u)=u^p$ we only get the trivial, zero solution.}
\item{The Fermi-Dirac distribution
\begin{equation}\label{Fermi}
f(u)=f_\delta(u):=\int_{0}^{\infty}\frac{t^{\delta}}{1+e^{t-u}}\;dt
\end{equation}
behaves like $\frac{1}{\delta+1}u^{\delta+1}$ as $u\to\infty$. Condition~\eqref{Sc1} holds if and only if $\delta+1>(d+2)/(d-2)$. The physically relevant examples require that $\delta=d/2-1$, that is $d>2\,(1+\sqrt{2})\approx4.83$. For more properties of these functions see, \emph{e.g.,\/}~\cite{MR2099972,MR2143357}.}
\end{enumerate}

\subsection{The non-local bifurcation problem}\label{Sec:Mult-NonLocal}

In this section we address, in $L^\infty\cap H^1_0(\Omega)$, the non-local boundary value problem~\eqref{Eqn:MultNonLoc} with parameter $\coupling>0$. Here $\Omega$ is a bounded domain in $\R^d$, $d\ge 3$, with $C^1$ boundary.

\bigskip We start with a non-existence result. Computations are similar to the ones of Section~\ref{Sec:Mult-Local} and rely on Poho\v{z}aev's method. First multiply \eqref{Eqn:MultNonLoc} by $u$ to get
\be{Eqn:Energy-MultNonLocal}
\int_\Omega|\nabla u|^2\;dx=\coupling\,\frac{\int_\Omega u\,f(u)\;dx}{\int_\Omega f(u)\;dx}\;.
\end{equation}
Multiplying \eqref{Eqn:MultNonLoc} by $(x\cdot\nabla u)$, we also get
\begin{equation}\label{mn0}
\frac{d-2}2\int_\Omega|\nabla u|^2\,dx+\frac 12\int_{\partial\Omega}|\nabla u|^2\,(x\cdot\nu)\,d\sigma=d\,\coupling\,\frac{\int_\Omega F(u)\;dx}{\int_\Omega f(u)\;dx}
\end{equation}
where $F$ is the primitive of $f$ chosen so that $F(0)=0$ and $d\sigma$ is the measure induced by Lebesgue's measure on $\partial\Omega$. A simple integration of \eqref{Eqn:MultNonLoc} gives
\[
\coupling=-\int_\Omega\Delta u\,dx=-\int_{\partial\Omega}\nabla u\cdot \nu\;d\sigma\;.
\]
By the Cauchy-Schwarz inequality,
\[
\coupling^2=\left(\int_{\partial\Omega}\nabla u\cdot \nu\;d\sigma\right)^2\leq |\partial\Omega|\int_{\partial\Omega}|\nabla u\cdot \nu|^2\,d\sigma=|\partial\Omega|\int_{\partial\Omega}|\nabla u|^2\,d\sigma\;,
\]
where the last equality holds because of the boundary conditions. Assume that $\Omega$ is strictly star-shaped with respect to the origin:
\be{star-shapedness}
\alpha:=\inf_{x\in\partial\Omega}(x\cdot\nu(x))>0\;.
\ee
Because of the invariance by translation of the problem, this is equivalent to assume that $\Omega$ is strictly star-shaped with respect to any other point in $\R^d$. Hence
\[
\int_{\partial\Omega}|\nabla u|^2\,(x\cdot\nu)\;d\sigma\geq \alpha \, \int_{\partial\Omega}|\nabla u|^2\;d\sigma \geq\frac{\alpha\,\coupling^2}{|\partial \Omega|}\;.
\]
Collecting this estimate with \eqref{Eqn:Energy-MultNonLocal} and \eqref{mn0}, we obtain
\[
\int_\Omega\Big[2\,d\,F(u)-(d-2)\,u\,f(u)\Big]\,dx \geq \frac{\alpha\,\coupling}{|\partial\Omega|} \int_\Omega f(u)\;dx\;.
\]
As a straightforward consequence, we obtain the following result.
\begin{theorem}\label{Thm:Non-mult} Assume that $\Omega$ is a bounded domain in $\R^d$, $d\ge 3$, with $C^1$ boundary satisfying~\eqref{star-shapedness} for some $\alpha>0$. If $f$ is a $C^1$ function such that for some $C>0$,
\be{Non-mul}
2\,d\,F(u)\leq (d-2)\,u\,f(u)+C\,f(u)
\ee
for any $u\geq 0$, then \eqref{Eqn:MultNonLoc} has no solution in $L^\infty\cap H^1_0(\Omega)$ if $\coupling>C\,|\partial\Omega|/\alpha$. \end{theorem}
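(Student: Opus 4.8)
The plan is to combine the two integral identities already established above---the energy identity \eqref{Eqn:Energy-MultNonLocal} obtained by multiplying \eqref{Eqn:MultNonLoc} by $u$, and the Rellich--Poho\v{z}aev identity \eqref{mn0} obtained by multiplying by $(x\cdot\nabla u)$---with the boundary lower bound coming from strict star-shapedness, and then to insert the pointwise growth hypothesis \eqref{Non-mul} to force $\coupling$ to be small. I would begin by fixing a solution $u\in L^\infty\cap H^1_0(\Omega)$ of \eqref{Eqn:MultNonLoc}; a standard elliptic bootstrap (bounded solution, $f\in C^1$) makes $u$ a classical solution for which the multiplications by $u$ and by $(x\cdot\nabla u)$, the integrations by parts, and the boundary integrals below are all legitimate on a $C^1$ domain. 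Since $\int_\Omega f(u)\,dx>0$, one may eliminate $\int_\Omega|\nabla u|^2\,dx$ between \eqref{Eqn:Energy-MultNonLocal} and \eqref{mn0} and clear the denominator to obtain
\[
\int_\Omega\Big[2\,d\,F(u)-(d-2)\,u\,f(u)\Big]\,dx=\frac{1}{\coupling}\left(\int_\Omega f(u)\,dx\right)\int_{\partial\Omega}|\nabla u|^2\,(x\cdot\nu)\,d\sigma\,,
\]
so the whole problem reduces to bounding the boundary integral from below.

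To do that, I would integrate \eqref{Eqn:MultNonLoc} over $\Omega$, which gives $\coupling=-\int_{\partial\Omega}\nabla u\cdot\nu\,d\sigma$, then apply the Cauchy--Schwarz inequality to get $\coupling^2\le|\partial\Omega|\int_{\partial\Omega}|\nabla u\cdot\nu|^2\,d\sigma$, and finally use that $u\equiv 0$ on $\partial\Omega$ forces $\nabla u=(\nabla u\cdot\nu)\,\nu$ there, so that $|\nabla u\cdot\nu|^2=|\nabla u|^2$ on $\partial\Omega$. Together with $x\cdot\nu\ge\alpha>0$ on $\partial\Omega$ from \eqref{star-shapedness}, this yields $\int_{\partial\Omega}|\nabla u|^2\,(x\cdot\nu)\,d\sigma\ge\alpha\,\coupling^2/|\partial\Omega|$, and hence
\[
\int_\Omega\Big[2\,d\,F(u)-(d-2)\,u\,f(u)\Big]\,dx\ \ge\ \frac{\alpha\,\coupling}{|\partial\Omega|}\int_\Omega f(u)\,dx\,.
\]
Now \eqref{Non-mul} bounds the integrand on the left by $C\,f(u)$ pointwise, so $C\int_\Omega f(u)\,dx\ge(\alpha\,\coupling/|\partial\Omega|)\int_\Omega f(u)\,dx$; dividing by the positive number $\int_\Omega f(u)\,dx$ gives $\coupling\le C\,|\partial\Omega|/\alpha$. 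If $\coupling>C\,|\partial\Omega|/\alpha$ this is impossible, so no such $u$ can exist, which is the claim.

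The computations themselves are elementary once the two identities are in hand; the only point requiring genuine care is the regularity needed to make the boundary terms meaningful, i.e. that a bounded $H^1_0$ solution is in fact classical up to $\partial\Omega$, which is precisely why the statement is phrased for solutions in $L^\infty\cap H^1_0(\Omega)$ and for a domain with $C^1$ boundary. One should also keep in mind throughout that $\int_\Omega f(u)\,dx$ is positive---this is built into the very formulation of \eqref{Eqn:MultNonLoc}---since both the elimination step and the final division rely on it.
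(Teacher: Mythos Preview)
Your proof is correct and follows essentially the same route as the paper: combine \eqref{Eqn:Energy-MultNonLocal} and \eqref{mn0} to eliminate $\int_\Omega|\nabla u|^2\,dx$, use the Cauchy--Schwarz inequality and the strict star-shapedness \eqref{star-shapedness} to bound the boundary term from below by $\alpha\,\coupling^2/|\partial\Omega|$, and then apply \eqref{Non-mul} pointwise to conclude $\coupling\le C\,|\partial\Omega|/\alpha$. Your explicit equality before inserting the boundary bound, and your remark that $|\nabla u\cdot\nu|^2=|\nabla u|^2$ on $\partial\Omega$, make the logic slightly more transparent than in the paper, but the argument is the same.
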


\noindent Standard examples, for which Condition \eqref{Non-mul} is satisfied, are:
\begin{enumerate}
\item\emph{Exponential case:\/} $f(u)=e^{u}$ with $C=2\,d$, \emph{cf.\/} \cite{MR2043943}. A sharper estimate can be easily achieved as follows. The function $h(u):=C\,e^u+(d-2)\,u\,e^u-2\,d\,(e^u-1)$ is nonnegative if $C$ is such that $0=h'(u)=h(u)$ for some $u\ge 0$. After eliminating~$u$, we find
\be{ExpSharp}
C=d+2+(d-2)\,\log\left(\frac{d-2}{2\,d}\right)\;.
\ee
\item\emph{Pure power law case:\/} If $f(u)=u^p$, the result holds with $p\geq \frac{d+2}{d-2}$ and $C=0$, \emph{cf.\/} \cite{MR0153960,MR1428686}. There are no non-trivial solutions.
\item\emph{Power law case:\/} If $f(u)=(1+u)^p$ with $p\geq \frac{d+2}{d-2}$, then \eqref{Non-mul} holds with $C=d-2$.
\end{enumerate}

\bigskip Uniqueness results in the non-local case follow from Section~\ref{Sec:Mult-Local}, when the coupling constant $\coupling$ is positive, small. In case of nonlinearities of exponential type, as far as we know, uniqueness results were guaranteed only under some additional assumptions, see \cite{MR1242651,MR1268074}. We are now going to extend such uniqueness results to more general nonlinearities satisfying \eqref{H1} and \eqref{Sc1} by comparing Problems \eqref{Eqn:MultLoc} and~\eqref{Eqn:MultNonLoc}.

Denote by $u_\lambda$ the solutions of \eqref{Eqn:MultLoc}. For $\lambda>0$, small, a branch of solutions of~\eqref{Eqn:MultNonLoc} can be parametrized by $\lambda\mapsto\left(\textstyle\coupling(\lambda):=\lambda\int_\Omega f(u_\lambda)\,dx\,,\;u_\lambda\right)$. Reciprocally, if $\Omega$ is bounded and
\[
0<\beta:=\inf_{u\geq 0}f(u)\;,
\]
then any solution $u\in L^\infty\cap H^1_0(\Omega)$ of~\eqref{Eqn:MultNonLoc} is also a solution of \eqref{Eqn:MultLoc} with
\[
\lambda=\frac\coupling{\int_\Omega f(u)\;dx}\leq\frac\coupling{\beta\,|\Omega|}\;.
\]
This implies that $\lambda$ is small for small $\coupling$ and, as a consequence, for small values of~$\coupling$, all solutions to \eqref{Eqn:MultNonLoc} are located somewhere on the local branch originating from~$(0,0)$. Moreover, as $\coupling\to 0_+$, the solution of \eqref{Eqn:MultNonLoc} also converges to~$(0,0)$. To prove the uniqueness in $L^\infty\cap H^1_0(\Omega)$ of the solutions of~\eqref{Eqn:MultNonLoc}, it is therefore sufficient to establish the monotonicity of $\lambda\mapsto\coupling(\lambda)$ for small values of $\lambda$. Assume that
\be{Monotonicity}
\mbox{\em $f(0)>0$ and $f$ is monotone non-decreasing on $\R^+$.}
\ee
Under this assumption, we observe that $\beta=f(0)$.

Let $u_1$ and $u_2$ be two solutions of \eqref{Eqn:MultLoc} with $\lambda_1<\lambda_2$ and let $v:=u_2-u_1$. Then for some function $\theta$ on $\Omega$, with values in $[0,1]$, we have
\[
-\Delta v-\lambda_1\,f'(u_1+\theta\,v)\,v=(\lambda_2-\lambda_1)\,f(u_2)\geq 0\;,
\]
so that, by the Maximum Principle, $v$ is nonnegative. Notice indeed that for $\lambda_2$ small enough, $u_1$ and $u_2$ are uniformly small since they lie on the local branch, close to the point $(0,0)$ and therefore $\lambda_1\,f'(u_1+\theta\,v)<1/C_{\rm P}$. It follows that
\[
\int_\Omega f(u_2)\;dx=\int_\Omega f(u_1+v)\;dx\geq\int_\Omega f(u_1)\;dx\;,
\]
thus proving that $\coupling(\lambda_2)=\lambda_2\int_\Omega f(u_2)\,dx>\lambda_1\int_\Omega f(u_1)\,dx=\coupling(\lambda_1)$.
\begin{corollary}\label{Cor:Multi1} Under the assumptions of Theorem~\ref{Thm:Multi0}, if moreover $f$ satisfies~\eqref{Monotonicity}, then there exists a positive constant $\coupling_0$ such that Equation~\eqref{Eqn:MultNonLoc} has at most one solution in $L^\infty\cap H^1_0(\Omega)$ for any $\coupling\in (0,\coupling_0)$.\end{corollary}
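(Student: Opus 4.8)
The plan is to reduce the uniqueness question for the non-local problem \eqref{Eqn:MultNonLoc} entirely to the local uniqueness result of Theorem~\ref{Thm:Multi0}, by exploiting the bijective correspondence between the two problems that has just been set up in the text preceding the statement. Concretely, I would argue as follows. First, recall that by Theorem~\ref{Thm:Multi0} there is a $\lambda_0>0$ such that \eqref{Eqn:MultLoc} has at most one solution in $L^\infty\cap H^1_0(\Omega)$ for each $\lambda\in(0,\lambda_0)$; combined with \eqref{H1} this gives a well-defined local branch $\lambda\mapsto u_\lambda$ of minimal (hence unique) solutions, which is uniformly small for $\lambda$ small.

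Second, I would invoke the two observations already made in the paragraphs above the corollary. On one hand, from \eqref{Monotonicity} we have $\beta=f(0)>0$, so any $u\in L^\infty\cap H^1_0(\Omega)$ solving \eqref{Eqn:MultNonLoc} with parameter $\coupling$ is automatically a solution of \eqref{Eqn:MultLoc} with $\lambda=\coupling/\int_\Omega f(u)\,dx\le\coupling/(\beta|\Omega|)$. Hence choosing $\coupling$ small forces $\lambda<\lambda_0$, so by Theorem~\ref{Thm:Multi0} this solution must coincide with $u_\lambda$ on the local branch. On the other hand, the monotonicity computation carried out just before the statement shows that the map $\lambda\mapsto\coupling(\lambda):=\lambda\int_\Omega f(u_\lambda)\,dx$ is strictly increasing on some interval $(0,\lambda_0')$, using the Maximum Principle applied to $v=u_2-u_1$ for $\lambda_1<\lambda_2$ small and the fact that $f$ is non-decreasing.

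Third, I would put these together. Let $\lambda_1:=\min\{\lambda_0,\lambda_0'\}$ and set $\coupling_0:=\coupling(\lambda_1)$, which is positive since $f(u_\lambda)\ge\beta>0$. Given $\coupling\in(0,\coupling_0)$, any solution $u$ of \eqref{Eqn:MultNonLoc} with parameter $\coupling$ satisfies \eqref{Eqn:MultLoc} with some $\lambda=\lambda(u)\le\coupling/(\beta|\Omega|)$, which can be arranged to be $<\lambda_1$ by further shrinking $\coupling_0$ if needed (replacing $\coupling_0$ by $\min\{\coupling_0,\beta|\Omega|\lambda_1\}$). By uniqueness on the local branch, $u=u_{\lambda(u)}$, and therefore $\coupling=\lambda(u)\int_\Omega f(u_{\lambda(u)})\,dx=\coupling(\lambda(u))$. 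Since $\coupling(\cdot)$ is strictly increasing on $(0,\lambda_1)$, the value $\lambda(u)$ is uniquely determined by $\coupling$, and hence so is $u=u_{\lambda(u)}$. This proves that \eqref{Eqn:MultNonLoc} has at most one solution in $L^\infty\cap H^1_0(\Omega)$ for $\coupling\in(0,\coupling_0)$.

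The only delicate point is bookkeeping of the various smallness thresholds: one must make sure the same range of $\coupling$ simultaneously guarantees (a) that every non-local solution lands in the regime $\lambda<\lambda_0$ where Theorem~\ref{Thm:Multi0} applies, (b) that it lands in the regime where the local branch $u_\lambda$ is defined and uniformly small enough for the Maximum Principle argument (i.e.\ $\lambda f'(u_\lambda+\theta v)<1/C_{\rm P}$), and (c) that $\coupling(\cdot)$ is strictly monotone there. All three are achieved by taking $\coupling_0$ small enough, and no new estimate beyond those already displayed in the text is required; the argument is essentially a packaging of the preceding discussion into a clean statement.
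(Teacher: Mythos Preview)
Your proposal is correct and follows precisely the route taken in the paper: the argument preceding the corollary already establishes (i) that every solution of \eqref{Eqn:MultNonLoc} with small $\coupling$ lands on the local branch via Theorem~\ref{Thm:Multi0} and the bound $\lambda\le\coupling/(\beta|\Omega|)$, and (ii) that $\lambda\mapsto\coupling(\lambda)$ is strictly increasing for small $\lambda$ by the Maximum Principle comparison; you have simply assembled these into the final uniqueness statement, including the minor bookkeeping of thresholds.
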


\section{The additive case}\label{Sec:Add}

\subsection{The local bifurcation problem}\label{Sec:Add-Local}

Consider in $L^\infty\cap H^1_0(\Omega)$ the equation \eqref{Eqn:AddLoc}. In the two standard examples of this paper the problem can be reduced to \eqref{Eqn:MultLoc} as follows.
\begin{enumerate}
\item\emph{Exponential case:\/} If $f(u)=e^{u}$, \eqref{Eqn:AddLoc} is equivalent to \eqref{Eqn:MultLoc} with $\lambda=e^\mu$ and the limit $\lambda\to 0_+$ corresponds to $\mu\to -\infty$.
\item\emph{Power law case:\/} If $f(u)=(1+u)^p$, \eqref{Eqn:AddLoc} is equivalent to \eqref{Eqn:MultLoc} with $\lambda=(1+\mu)^{p-1}$ and the limit $\lambda\to 0_+$ corresponds to $\mu\to -1_+$. If $u$ is solution of $\Delta u + (1+u+\mu)^p=0$, one can indeed observe that $v$ such that $1+u+\mu=(1+\mu)(1+v)$ solves $\Delta v+ \lambda\,(1+v)^p=0$ with $\lambda=(1+\mu)^{p-1}.$
\end{enumerate}

Equation \eqref{Eqn:AddLoc} is however not completely equivalent to \eqref{Eqn:MultLoc}. To obtain a non-existence result for large values of $\mu$, we impose the assumption that reads
\be{Cdt:Superlinear}
\lim_{u\to \infty}\frac{f(u)}u=+\infty\;.
\ee
\begin{proposition}\label{Prop:Explosion2} Assume that \eqref{Ineq:Poincare}, \eqref{PosOptFn} and \eqref{Cdt:Superlinear} hold. There exists $\mu_*>0$ such that \eqref{Eqn:AddLoc} has no positive, bounded solution in $H^1_0(\Omega)$ if $\mu>\mu_*$. \end{proposition}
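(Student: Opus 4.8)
The plan is to mimic the proof of Proposition~\ref{Prop:Explosion}, replacing the role of the linear lower bound $\Lambda\,u$ by a lower bound of the form $f(u+\mu)\geq \theta_\mu\,(u+\mu)$ valid on $\R^+$, where $\theta_\mu\to\infty$ as $\mu\to\infty$ by virtue of the superlinear growth assumption \eqref{Cdt:Superlinear}. Concretely, let $\varphi_1>0$ be the first eigenfunction of $-\Delta$ on $H^1_0(\Omega)$ associated with $\lambda_1=1/C_{\rm P}$, whose existence (and positivity) is guaranteed by \eqref{PosOptFn} together with \eqref{Ineq:Poincare}. Suppose $u\in H^1_0(\Omega)$ is a positive bounded solution of \eqref{Eqn:AddLoc}. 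Multiplying the eigenvalue equation by $u$ and \eqref{Eqn:AddLoc} by $\varphi_1$ and integrating by parts, one gets
\[
\lambda_1\int_\Omega u\,\varphi_1\;dx=\int_\Omega\nabla u\cdot\nabla\varphi_1\;dx=\int_\Omega f(u+\mu)\,\varphi_1\;dx\;.
\]
The task is then to bound the right-hand side from below by a large multiple of $\int_\Omega u\,\varphi_1\,dx$, which forces a contradiction once $\mu$ is large.

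The key step is the elementary observation that \eqref{Cdt:Superlinear} yields, for every $K>0$, a threshold $\mu_K$ such that $f(s)\geq K\,s$ for all $s\geq \mu_K$; since $u\geq 0$ and $\mu>0$ imply $u+\mu\geq\mu$, we have $f(u+\mu)\geq K\,(u+\mu)\geq K\,u$ pointwise on $\Omega$ as soon as $\mu\geq\mu_K$. Feeding this into the identity above gives
\[
\lambda_1\int_\Omega u\,\varphi_1\;dx=\int_\Omega f(u+\mu)\,\varphi_1\;dx\geq K\int_\Omega u\,\varphi_1\;dx\;,
\]
and since $\int_\Omega u\,\varphi_1\,dx>0$ for a non-trivial nonnegative $u$ (both factors being positive), we may divide through to obtain $\lambda_1\geq K$. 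Choosing $K>\lambda_1=1/C_{\rm P}$ from the start and setting $\mu_*:=\mu_K$ produces the desired contradiction, so no positive bounded solution exists for $\mu>\mu_*$.

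I do not expect a serious obstacle here; the statement is, like Proposition~\ref{Prop:Explosion}, essentially a one-line consequence of testing against the first eigenfunction, the only new ingredient being that superlinearity replaces the bound $\Lambda=\inf_{u>0}f(u)/u$, which need not be positive in the additive setting. The one point deserving a word of care is the legitimacy of the integration by parts: since we restrict to bounded solutions, elliptic regularity (bootstrap) makes $u$ classical, so multiplying \eqref{Eqn:AddLoc} by $\varphi_1\in H^1_0(\Omega)$ and integrating by parts is justified, exactly as in the discussion preceding Lemma~\ref{Lem:existence}. One should also note that the argument in fact only uses $f(u+\mu)\geq K\,u$ for $u+\mu$ above a threshold, so no monotonicity or sign condition on $f$ near the origin is needed; superlinearity at $+\infty$ alone suffices, which is consistent with the hypotheses of the proposition.
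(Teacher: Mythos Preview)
Your proof is correct and essentially identical to the paper's: both test against the first eigenfunction $\varphi_1$, use $u+\mu\geq\mu$ together with \eqref{Cdt:Superlinear} to get $f(u+\mu)\geq \Lambda(\mu)\,(u+\mu)\geq\Lambda(\mu)\,u$ with $\Lambda(\mu):=\inf_{s\geq\mu}f(s)/s\to\infty$, and conclude that $\lambda_1\geq\Lambda(\mu)$ is impossible for large $\mu$. Your $\mu_K$ is just the paper's choice of $\mu$ with $\Lambda(\mu)\geq K$.
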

\begin{proof} The proof is similar to the one of Proposition \ref{Prop:Explosion}. Let $\varphi_1$ be a positive eigenfunction associated with the first eigenvalue $\lambda_1=1/C_{\rm P}$ of $-\Delta$ in $H^1_0(\Omega)$. For any~$\mu\geq 0$,
\[
\lambda_1\int_\Omega u\,\varphi_1\;dx=\int_\Omega f(u+\mu)\,\varphi_1\;dx\geq \Lambda(\mu)\int_\Omega (u+\mu)\,\varphi_1\;dx\ge \Lambda(\mu)\int_\Omega u\,\varphi_1\;dx\;,
\]
where $\Lambda(\mu):=\inf_{s\geq\mu}f(s)/s$, thus proving that there are no nonnegative solutions if $\Lambda(\mu)>\lambda_1$. \end{proof}

\medskip Let us make a few comments on the existence of a branch of solutions, although this is out of the main scope of this paper. Let $f$ be a positive function of class $C^2$ on $(\bar\mu,\infty)$, for some $\bar\mu\in [-\infty,\infty)$, with $\lim_{\mu\to\bar\mu_+}f(\mu)=0$. We shall assume that there is a branch of minimal solutions $(\mu ,u_\mu)$ originating from $(\bar\mu,0)$ and such that
\be{H1_}
\lim_{\mu\to \bar\mu }\left(\|u_\mu \|_{L^\infty(\Omega)}+\|\nabla u_\mu\|_{L^\infty(\Omega)}\right)=0\;.
\ee
This can be guaranteed if $\Omega$ is bounded and if we additionally require that the function $f$ is increasing, as in \cite{MR2134461} for the Fermi-Dirac model. This is also true for exponential and power-like nonlinearities. At least at a formal level, this can easily be understood by taking $\zeta = f'(\mu)$ as a bifurcation parameter. A solution of \eqref{Eqn:AddLoc} is then a zero of $F(\zeta,u)= u-\left(-\Delta \right)^{-1} f(u+(f')^{-1}(\zeta))$ and it is therefore easy to find a branch issued from $(\zeta,u)=(0,0)$ by applying the implicit function theorem at $(\zeta,u)=(0,0)$ with $F(0,0)=0$, even if $\bar\mu=-\infty$. Using comparison arguments, one can prove that this branch is a branch of minimal solutions.

\medskip We shall now address the uniqueness issues. We assume that \eqref{Sc1} holds:
\[
\forall\;\eta_1\in\left(\eta,\frac{d-2}{2\,d}\right)\;,\quad\limsup_{u\to\infty}\frac{F(u)-\eta_1\,u\,f(u)}{u\,f(u)}=\eta-\eta_1<0\;.
\]
As a consequence, for any $\mu>\bar\mu$,
\[
F(v+\mu)-F(\mu)-F'(\mu)\,v-\eta_1 v\,\big[f(v+\mu)-f(\mu)\big]
\]
is negative for large $v$, and the function ${\mathcal H}(v,\mu,\eta_1)$ defined by
\[
v^2\,{\mathcal H}(v,\mu,\eta_1)=F(v+\mu)-F(\mu)-F'(\mu)\,v-\eta_1\, v\,\big[f(v+\mu)-f(\mu)\big]
\]
achieves a maximum for some finite value of $v$. With $H(\mu,\eta_1)=\sup_{v>0}{\mathcal H}(v,\mu,\eta_1)$, we have
\begin{equation}\label{Hv2}
F(v+\mu)-F(\mu)-F'(\mu)\,v-\eta_1\, v\,\big[f(v+\mu)-f(\mu)\big]\leq H(\mu,\eta_1)\,v^2\,.
\end{equation}
Next we assume that, for some $\eta_1\in\left(\eta,\frac{d-2}{2\,d}\right)$, we have
\be{Cdt:AddLoc}
C_{\rm P}\,H(\mu,\eta_1)<\frac{d-2}{2\,d}-\eta_1\;,
\ee
where $C_{\rm P}$ is the Poincar\'e constant. This condition is non-trivial. It relates $H(\mu,\eta_1)$, a quantity attached to the nonlinearity, to $C_{\rm P}$ which has to do only with $\Omega$. It is satisfied for all our basic examples.
\begin{enumerate}
\item\emph{Exponential case:\/} If $f(u)=e^{u}$, we take $\mu$ negative, with $|\mu|$ big enough. Indeed, using first the homogeneity, one obtains ${\mathcal H}(v,\mu,\eta_1)=e^\mu\,{\mathcal H}(v,0,\eta_1)$. Since $\lim_{v\to 0_+}{\mathcal H}(v,0,\eta_1)=(1-2\,\eta_1)/2$ and ${\mathcal H}(v,0,\eta_1)$ becomes negative as $v\to +\infty$, as a function of $v\in\R^+$ ${\mathcal H}(v,0,\eta_1)$ admits a maximum value. To get a more explicit bound, we take a Taylor expansion at second order, namely $e^{\theta\,v}(1-2\,\eta_1-\eta_1\,\theta\,v)/2$ for some intermediate number $\theta\in (0,1)$. An upper bound is given by $\eta_1\,e^{1/\eta_1-3}/2$, which corresponds to the above expression evaluated at $\theta\,v=1/\eta_1-3$. According to \eqref{Sc1}, $\eta=0$: taking $\eta_1$ small enough guarantees \eqref{Cdt:AddLoc}.
\item\emph{Power law case:\/} If $f(u)=(1+u)^p$, we have ${\mathcal H}(v,\mu,\eta_1)=(1+\mu)^{p+1}{\mathcal H}(w,0,\eta_1)$ where $w=v/(\mu+1)$. Since $\lim_{v\to 0_+}{\mathcal H}(v,0,\eta_1)=p\,(1-2\,\eta_1)/2$ and ${\mathcal H}(v,0,\eta_1)$ becomes negative as $v\to +\infty$, ${\mathcal H}$ achieves a positive maximum.
\item\emph{Fermi-Dirac distribution case:\/} If $f(u)=f_{d/2-1}(u)$, we observe that
\begin{equation} \label{Sc2}
\limsup_{u\rightarrow \infty} \frac{f'(u)}{u\,f''(u)+2\,f'(u)}=\eta< \frac{d-2}{2\,d}
\end{equation}
if $d>2\,(1+\sqrt{2})$, which is stronger than Assumption \eqref{Sc1}, as can easily be recovered by integrating $f'(u)-\eta\,\big[u\,f''(u)+2\,f'(u)\big]$ twice, for large values of $u$. Take $\eta_1\in (\eta,(d-2)/(2\,d))$. A Taylor expansion shows that
\begin{eqnarray*}
{\mathcal H}(v,\mu,\eta_1)&=&f'(u)-\eta_1\,\big(u\,f''(u)+2\,f'(u)\big)+\mu\,\eta_1\,f''(u)\\
&=&a\left[f'(u)-\tfrac{\eta+\eta_1}2\,\big(u\,f''(u)+2\,f'(u)\big)\right]+(\mu-b\,u)\,\eta_1\,f''(u)
\end{eqnarray*}
with $a=\tfrac {1-2\,\eta_1}{1-\eta-\eta_1}$, $b=\tfrac{\eta_1-\eta}{2\,\eta_1(1-\eta-\eta_1)}$ and $u=\mu+\theta\,v$ for some $\theta\in(0,1)$. Both terms in the above right hand side are negative for $u$ large enough, which proves the existence of a constant $H(\mu,\eta_1)$ such that \eqref{Hv2} holds. Notice that by \cite[Appendix]{MR2143357}, $f$ and its derivatives behave like exponentials for $u<0$, $|u|$ large. Under the additional assumption $d\ge 6$, a tedious but elementary computation shows that, as $\mu\to -\infty$, the maximum of
\[
u\mapsto a\left[f'(u)-\tfrac{\eta+\eta_1}2\,\big(u\,f''(u)+2\,f'(u)\big)\right]+(\mu-b\,u)\,\eta_1\,f''(u)
\]
is achieved at some $u=o(\mu)$, which proves that \hbox{$\lim_{\mu\to-\infty}H(\mu,\eta_1)=0$}. Moreover, for any $d>2(1+\sqrt{2})$ one can still show that this maximum value behaves like $\exp(\mu)$ and thus can be made arbitrarily small for negative $\mu$ with $|\mu|$ large enough.
\end{enumerate}

\medskip Assume that \eqref{Eqn:AddLoc} has two solutions, $u$ and $u+v$, with $v\ge 0$, and let us write the equation for the difference $v$ as
\be{Eqn:Diff_}
\Delta v+f(u+v+\mu)-f(u+\mu)=0\;.
\ee
The method is the same as in Section~\ref{Sec:Mult}. Multiply \eqref{Eqn:Diff_} by $x\cdot\nabla v$ and integrate with respect to $x\in\Omega$. If $F$ is a primitive of $f$ such that $F(\bar\mu)=0$, then
\begin{multline*}
\frac{d-2}2\int_\Omega|\nabla v|^2\,dx+\frac 12\int_{\partial\Omega}|\nabla v|^2(x\cdot\nu(x))\,d\sigma\\
=d\,\int_\Omega\big[F(u+v+\mu)-F(u+\mu)-F'(u+\mu)\,v\big]\,dx \\
+\int_\Omega(x\cdot\nabla u)\big[f(u+v+\mu)-f(u+\mu)-f'(u+\mu)\,v\big]\,dx\;.
\end{multline*}
Assume that \eqref{Cdt:AddLoc} holds for some $\eta_1$. If $\Omega$ is bounded, $|x\cdot\nabla u|$ is uniformly small as $\mu\to\bar\mu_+$, and we may assume that for any \hbox{$\varepsilon>0$}, arbitrarily small, there exists $\mu_0>\bar\mu$, sufficiently close to $\bar\mu$ (that is, $\mu_0-\bar\mu>0$, small if $\bar\mu>-\infty$, or $\mu_0<0$, $|\mu_0|$ big enough if $\bar\mu=-\infty$), such that $|x\cdot\nabla u|\le\varepsilon$ for any $x\in \Omega$ if $\mu\in(\bar\mu,\mu_0)$. Next we define
\begin{multline*}\label{hfu}
h_\varepsilon(v):=d\,\big[F(z+v)-F(z)-F'(z)\,v\big]+\varepsilon \left|\,f(z+v)-f(z)-f'(z)\,v\,\right|\\
-\;d\,\eta_1\, v\big[f(z+v)-f(z)\big]\;,
\end{multline*}
where $z=u+\mu$. Using the star-shapedeness of the domain $\Omega$, we have
\[
\frac{d-2}2\int_\Omega|\nabla v|^2\,dx\leq\int_\Omega h_\varepsilon(v)\;dx + d\,\eta_1\int_\Omega v\big[f(z+v)-f(z)\big]\;dx\;.
\]
Up to a small change of $\eta_1$, so that Condition \eqref{Cdt:AddLoc} still holds, for $\varepsilon>0$, small enough, we get
\[
\frac 1d\,h_\varepsilon(v)\leq F(z+v)-F(z)-F'(z)\,v-\eta_1\, v\big[f(z+v)-f(z)\big]\,.
\]
As $\varepsilon\to 0_+$, $z$ converges to $\mu$ uniformly and the above right hand side is equivalent to $F(v+\mu)-F(\mu)-F'(\mu)\,v-\eta_1\,v\big[f(v+\mu)-f(\mu)\big]$. For some $\delta>0$, arbitrarily small, we obtain
\[
\frac 1d\,h_\varepsilon(v)\leq (H(\mu,\eta_1)+\delta)\,v^2\,.
\]
{}From \eqref{Eqn:Diff_} multiplied by $v$, after an integration by parts we obtain
\[
\int_\Omega|\nabla v|^2\,dx=\int_\Omega v\big[f(z+v)-f(z)\big]\,dx\;.
\]
Hence we have shown that
\[
\left(\frac{d-2}{2\,d}-\eta_1 \right)\int_\Omega|\nabla v|^2\,dx\leq(H(\mu,\eta_1)+\delta)\int_\Omega|v|^2\,dx\;,
\]
By the Poincar\'e inequality~\eqref{Ineq:Poincare}, the left hand side is bounded from below by
\[
\left(\frac{d-2}{2\,d}-\eta_1 \right)\int_\Omega|\nabla v|^2\,dx\geq\frac 1{C_{\rm P}}\left(\frac{d-2}{2\,d}-\eta_1 \right)\int_\Omega|v|^2\,dx\;.
\]
Summarizing, we have proved that, if $\int_\Omega |v|^2\,dx\neq 0$, then, for an arbitrarily small $\delta>0$, 
\[
\frac 1{C_{\rm P}}\left(\frac{d-2}{2\,d}-\eta_1 \right)\leq H(\mu,\eta_1)+\delta
\]
if $\mu-\bar\mu>0$ is small if $\bar\mu>-\infty$, or $\mu<0$, $|\mu|$ big enough if $\bar\mu=-\infty$. This contradicts \eqref{Cdt:AddLoc} unless $v\equiv 0$.
\begin{theorem}\label{Thm:Add} Assume that $\Omega$ is a bounded star-shaped domain in $\R^d$, with $C^{2,\gamma}$ boundary, $\gamma\in (0,1)$, such that \eqref {Ineq:Poincare} holds. If $f\in C^2$ satisfies~\eqref{Sc1} and \eqref{Cdt:AddLoc}, if $\lim_{\mu\to\bar\mu}f(\mu)=0$, then there exists a $\mu_0\in(\bar\mu,\infty)$ such that Equation~\eqref{Eqn:AddLoc} has at most one solution in $L^\infty\cap H^1_0(\Omega)$ for any $\mu\in(\bar\mu,\mu_0)$.
\end{theorem}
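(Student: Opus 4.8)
The plan is to argue by contradiction, repeating almost verbatim the Rellich--Poho\v{z}aev computation carried out for \eqref{Eqn:MultLoc} in Section~\ref{Sec:Mult-Local}, with the role of the small parameter $\lambda$ now played by the proximity of $\mu$ to $\bar\mu$. Up to a translation I may assume $\Omega$ is star-shaped with respect to the origin. I will fix $\eta_1\in\left(\eta,\frac{d-2}{2\,d}\right)$ for which \eqref{Cdt:AddLoc} holds and let $u_\mu$ denote the minimal solution on the branch issued from $(\bar\mu,0)$, which by \eqref{H1_} is small in $L^\infty\cap W^{1,\infty}(\Omega)$ once $\mu$ is close enough to $\bar\mu$. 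If $\tilde u\in L^\infty\cap H^1_0(\Omega)$ is any solution of \eqref{Eqn:AddLoc}, minimality of $u_\mu$ gives $\tilde u\ge u_\mu$, so that $v:=\tilde u-u_\mu\ge 0$ and $v$ solves \eqref{Eqn:Diff_} with $u=u_\mu$; the goal is to show $v\equiv 0$, which then yields at most one solution on $(\bar\mu,\mu_0)$ for a suitable $\mu_0$.

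First I will record two integral identities for $v$: multiplying \eqref{Eqn:Diff_} by $v$ and integrating by parts gives $\int_\Omega|\nabla v|^2\,dx=\int_\Omega v\,[f(u+v+\mu)-f(u+\mu)]\,dx$, while multiplying by $x\cdot\nabla v$ and integrating by parts gives the Poho\v{z}aev identity whose boundary term $\frac12\int_{\partial\Omega}|\nabla v|^2\,(x\cdot\nu)\,d\sigma$ is nonnegative by star-shapedness and may be discarded. The only non-autonomous term, $\int_\Omega(x\cdot\nabla u)\,[f(u+v+\mu)-f(u+\mu)-f'(u+\mu)\,v]\,dx$, will be absorbed using $|x\cdot\nabla u|\le\varepsilon$, valid on all of $\Omega$ for $\mu$ close to $\bar\mu$ by \eqref{H1_}; this produces the function $h_\varepsilon(v)$ introduced above. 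Combining the two identities, using the pointwise bound $\frac1d\,h_\varepsilon(v)\le F(z+v)-F(z)-F'(z)\,v-\eta_1\,v\,[f(z+v)-f(z)]$ with $z=u+\mu$, and then letting $\varepsilon\to0_+$ so that $z\to\mu$ uniformly, estimate \eqref{Hv2} gives $\frac1d\,h_\varepsilon(v)\le(H(\mu,\eta_1)+\delta)\,v^2$ for arbitrarily small $\delta>0$. Hence $\left(\frac{d-2}{2\,d}-\eta_1\right)\int_\Omega|\nabla v|^2\,dx\le(H(\mu,\eta_1)+\delta)\int_\Omega|v|^2\,dx$, and applying the Poincar\'e inequality \eqref{Ineq:Poincare} to the left-hand side forces $\frac1{C_{\rm P}}\left(\frac{d-2}{2\,d}-\eta_1\right)\le H(\mu,\eta_1)+\delta$ whenever $v\not\equiv0$, contradicting \eqref{Cdt:AddLoc} once $\delta$ is small. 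This gives $v\equiv0$.

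The delicate point is the same double limit $\varepsilon\to0$, $\delta\to0$ together with the requirement that $\mu$ be chosen close enough to $\bar\mu$ so that simultaneously: (i) $u_\mu$ is small in $W^{1,\infty}$, making both $|x\cdot\nabla u|$ and the distance from $z=u+\mu$ to $\mu$ as small as we wish; (ii) the bound $h_\varepsilon(v)/v^2\le d\,(H(\mu,\eta_1)+\delta)$ is uniform in $v>0$, which requires that the supremum defining $H(\mu,\eta_1)$ be attained at a finite value of $v$ and depend continuously on $\mu$, as verified case by case in the examples; and (iii) $C_{\rm P}\,H(\mu,\eta_1)$ genuinely lies strictly below $\frac{d-2}{2\,d}-\eta_1$ --- this is precisely \eqref{Cdt:AddLoc}, which the exponential, power-law and Fermi--Dirac computations above show can be met by pushing $\mu$ toward $\bar\mu$ (and, in the exponential case, by also taking $\eta_1$ small). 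Once these are in place, the remainder is a routine transcription of the multiplicative argument of Section~\ref{Sec:Mult}.
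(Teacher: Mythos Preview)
Your proposal is correct and follows the paper's own argument essentially step for step: subtract the minimal solution, derive the energy identity and the Poho\v{z}aev identity for the difference $v$, drop the boundary term by star-shapedness, absorb the $(x\cdot\nabla u)$ term via $|x\cdot\nabla u|\le\varepsilon$, bound $h_\varepsilon(v)$ by $d\,(H(\mu,\eta_1)+\delta)\,v^2$ using \eqref{Hv2} and the uniform convergence $z\to\mu$, and conclude by Poincar\'e and \eqref{Cdt:AddLoc}. The only nuance you gloss over in paragraph two---that the inequality $\tfrac1d\,h_\varepsilon(v)\le F(z+v)-F(z)-F'(z)\,v-\eta_1\,v\,[f(z+v)-f(z)]$ requires a slight perturbation of $\eta_1$ (still within the range where \eqref{Cdt:AddLoc} holds) to swallow the $\varepsilon$-term---is exactly the device the paper uses, and you acknowledge the issue in your discussion of the double limit.
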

In cases of practical interest for applications, one often has to deal with the equation $\Delta u+f(x,u+\mu)=0$. Our method can be adapted in many cases, that we omit here for simplicity. The necessary adaptations are left to the reader.

\subsection{The non-local bifurcation problem}\label{Sec:Add-NonLocal}

In this section we address problem \eqref{Non-a-Mno} with parameter $M>0$, in a bounded star-shaped domain $\Omega$ in $\R^d$. Consider in $L^\infty\cap H^1_0(\Omega)$ the positive solutions of \eqref{Non-a-Mno}, that is of
\be{Non-a}
\Delta u + f(u+\mu)=0
\ee
where $\mu$ is determined by the non-local normalization condition
\be{Mno}
M=\int_\Omega f(u+\mu)\;dx\;.
\ee
We observe that in the exponential case, $f(u)=e^u$, \eqref{Non-a-Mno} is equivalent to the non-local multiplicative case, \eqref{Eqn:MultNonLoc}. The condition \eqref{Mno} is indeed explicitly solved by $e^\mu\int_\Omega e^u\,dx=M=\coupling$.

\medskip Non-existence results for large values of $M$ can be achieved by the same method as in the multiplicative non-local case. If we multiply \eqref{Non-a} by $u$ and $(x\cdot\nabla u)$, we get
\begin{eqnarray*}
&&\int_\Omega|\nabla u|^2\;dx=\int_\Omega u\,f(u+\mu)\;dx\;,\\
&&\frac{d-2}2\int_\Omega|\nabla u|^2\,dx+\frac 12\int_{\partial\Omega}|\nabla u|^2\,(x\cdot\nu)\,d\sigma=d\,\int_\Omega \big(F(u+\mu)-F(\mu)\big)\;dx\;.
\end{eqnarray*}
The elimination of $\int_\Omega|\nabla u|^2\,dx$ gives
\[
\int_\Omega \big[2\,d\,\big(F(u+\mu)-F(\mu)\big)-(d-2)\,u\,f(u+\mu)\big]\,dx\ge\int_{\partial\Omega}|\nabla u|^2\,(x\cdot\nu)\;d\sigma\;.
\]
By the Cauchy-Schwarz inequality, we know that
\[
M^2=\left(\int_{\partial\Omega}\nabla u\cdot \nu\;d\sigma\right)^2\leq |\partial\Omega|\int_{\partial\Omega}|\nabla u|^2\;d\sigma\;.
\]
If~\eqref{star-shapedness} holds, then, as in Section \ref{Sec:Mult-NonLocal},
\[
\alpha\,M^2\leq |\partial\Omega|\int_{\partial\Omega}|\nabla u|^2\,(x\cdot\nu)\;d\sigma\;.
\]
Summarizing we have found that
\be{Ineq:BoundForFermi}
\int_\Omega \big[2\,d\,\big(F(u+\mu)-F(\mu)\big)-(d-2)\,u\,f(u+\mu)\big]\,dx\ge\frac{\alpha\,M^2}{|\partial \Omega|}\;.
\ee
This suggests a condition similar to the one in the multiplicative case, \eqref{Non-mul}. Define
\[
G(\mu):=\sup_{z>\mu}\big[2\,d\,\big(F(z)-F(\mu)\big)-(d-2)\,f(z)\,(z-\mu)\big]\,/\,f(z)\;.
\]
If $f$ is supercritical in the sense of \eqref{Sc1}, $G$ is well defined, but in some cases, it also makes sense for $d=2$. For simplicity, we shall assume that $G$ is a non-decreasing function of $\mu$. As a consequence, we can state the following theorem, which generalizes known results on exponential and Fermi-Dirac distributions, \emph{cf.\/}~\cite{MR2043943} and \cite{MR2134461,MR2136979}, respectively.
\begin{theorem}\label{Thm:Non-add} Assume that $\Omega$ is a bounded domain in $\R^d$, $d\ge 2$, with $C^1$ boundary satisfying~\eqref{star-shapedness} for some $\alpha>0$. If $f$ is a $C^1$ positive, non-decreasing function such that \eqref{Sc1} holds and if $G$ is non-decreasing, then \eqref{Non-a-Mno} has no solution in $L^\infty\cap H^1_0(\Omega)$~if
\[
M>\frac{|\partial\Omega|}\alpha\;(G\circ f^{-1})\left({\textstyle \frac M{|\Omega|}}\right)\,.
\]
\end{theorem}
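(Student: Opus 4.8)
The plan is to combine the Pohozaev-type identity already derived in the text, namely inequality~\eqref{Ineq:BoundForFermi}, with the definition of $G$ and the monotonicity assumptions, so that the argument parallels the proof of Theorem~\ref{Thm:Non-mult} in the multiplicative non-local case. The starting point is \eqref{Ineq:BoundForFermi}:
\[
\int_\Omega\big[2\,d\,\big(F(u+\mu)-F(\mu)\big)-(d-2)\,u\,f(u+\mu)\big]\,dx\ge\frac{\alpha\,M^2}{|\partial\Omega|}\;.
\]
For each $x\in\Omega$ write $z=u(x)+\mu>\mu$ (recall $u\ge 0$); then the integrand at $x$ equals $\big[2\,d\,(F(z)-F(\mu))-(d-2)\,f(z)\,(z-\mu)\big]$, which by the very definition of $G(\mu)$ is bounded above by $G(\mu)\,f(z)=G(\mu)\,f(u+\mu)$. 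Integrating over $\Omega$ and using the constraint \eqref{Mno} that $\int_\Omega f(u+\mu)\,dx=M$, the left-hand side is at most $G(\mu)\,M$. Combining with the lower bound yields
\[
\frac{\alpha\,M^2}{|\partial\Omega|}\le G(\mu)\,M\,,\qquad\text{i.e.}\qquad M\le\frac{|\partial\Omega|}{\alpha}\,G(\mu)\;.
\]

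Next I would convert the dependence on $\mu$ into a dependence on $M$ alone. Since $f$ is positive and non-decreasing, for $x\in\Omega$ we have $f(u(x)+\mu)\ge f(\mu)$, hence from \eqref{Mno}
\[
M=\int_\Omega f(u+\mu)\,dx\ge|\Omega|\,f(\mu)\,,
\]
so that $f(\mu)\le M/|\Omega|$. Applying $f^{-1}$ (which is non-decreasing, $f$ being positive and non-decreasing, and well defined on the relevant range since $f$ is continuous; one invokes the asymptotic supercriticality \eqref{Sc1} only to know $G$ is finite) gives $\mu\le f^{-1}(M/|\Omega|)$. Now the hypothesis that $G$ is non-decreasing yields $G(\mu)\le G\big(f^{-1}(M/|\Omega|)\big)=(G\circ f^{-1})(M/|\Omega|)$, and therefore any solution must satisfy
\[
M\le\frac{|\partial\Omega|}{\alpha}\,(G\circ f^{-1})\!\left(\frac{M}{|\Omega|}\right)\,.
\]
Contrapositively, if $M>\frac{|\partial\Omega|}{\alpha}\,(G\circ f^{-1})(M/|\Omega|)$, then no solution of \eqref{Non-a-Mno} exists in $L^\infty\cap H^1_0(\Omega)$, which is exactly the claim.

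The steps here are essentially routine once \eqref{Ineq:BoundForFermi} is in hand, so the substantive content lies upstream: deriving \eqref{Ineq:BoundForFermi} via the two multipliers $u$ and $x\cdot\nabla u$ together with the Cauchy--Schwarz boundary estimate and the strict star-shapedness \eqref{star-shapedness}, all of which is carried out in the text preceding the statement. The one point requiring a little care in writing the proof cleanly is the justification that $f^{-1}$ is well defined and monotone on the image of $f$ restricted to the range of admissible $\mu$-values, and that $G$ is finite there — both of which follow from $f$ being positive, continuous, non-decreasing and supercritical in the sense of \eqref{Sc1}; I would state these observations briefly rather than belabour them. I do not expect a genuine obstacle; the argument is a direct transcription of the multiplicative non-local proof with $F(u)$, $u\,f(u)$ and $f(u)$ replaced by their shifted counterparts $F(u+\mu)-F(\mu)$, $u\,f(u+\mu)$ and $f(u+\mu)$.
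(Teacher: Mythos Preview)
Your proof is correct and follows essentially the same route as the paper: bound the Pohozaev integral \eqref{Ineq:BoundForFermi} by $G(\mu)\,M$ using the definition of $G$ and the mass constraint, then use $M\ge |\Omega|\,f(\mu)$ together with the monotonicity of $G$ and $f^{-1}$ to replace $\mu$ by $f^{-1}(M/|\Omega|)$. The paper's proof is just a terser version of what you wrote, relying on the computations preceding the statement and on the generalized inverse $f^{-1}(t):=\sup\{s:f(s)\le t\}$ to handle the case where $f$ is merely non-decreasing.
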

\noindent Here by $f^{-1}$ one has to understand the generalized inverse given by $f^{-1}(t):=\sup\left\{s\in\R:f(s)\le t \right\}$.
\begin{proof} From the above definitions and computations, we have
\[
\frac{\alpha\,M^2}{|\partial \Omega|}\leq G(\mu)\,M\;.
\]
Since $f$ is non-decreasing and the solution $u$ of \eqref{Non-a} is positive, while $M=\int_\Omega f(u+\mu)\,dx\geq f(\mu)\,|\Omega|$, this completes the proof. \end{proof}

\noindent Theorem~\ref{Thm:Non-add} can be illustrated by the following examples. \begin{enumerate}
\item\emph{Exponential case:\/} if $f(u)=e^{u}$ and $d\geq 3$, then $G(\mu)\equiv d+2+(d-2)\,\log(\frac{d-2}{2\,d})$ does not depend on $\mu$. If $d=2$, $G(\mu)\equiv 4$. In both cases \eqref{Non-a-Mno} has no bounded solution if $M>|\partial\Omega|\,G/\alpha$. We recover here the condition corresponding to~\eqref{ExpSharp} and Theorem~\ref{Thm:Non-mult}.
\item\emph{Power law case:\/} if $f(u)=u^p$ with $p\geq\frac{d+2}{d-2}$, then $G(\mu)=\mu\,G(1)$. Using $\mu\leq(M/|\Omega|)^{1/p}$, it follows that \eqref{Non-a-Mno} has no bounded solution if
\[
M^\frac{p-1}p>\frac{G(1)}\alpha\,\frac{|\partial\Omega|}{|\Omega|^{1/p}}\;.
\]
\item\emph{Fermi-Dirac distribution case:\/} If $f(u)=f_\delta(u)$ where $f_\delta$ is the Fermi-Dirac distribution defined by \eqref{Fermi} with $\delta=d/2-1$ and $d>2\,(1+\sqrt{2})$, then $f$ is increasing, $F=\tfrac 2d\,f_{d/2}$ is the primitive of $f$ such that $\lim_{u\to-\infty}F(u)=0$,
\[
G_d:=\sup_{z\in\R}\left[4\,f_{d/2}(z)-(d-2)\,z\,f_{d/2-1}(z)\right]=\sup_{z\in\R}\left[2\,d\,F(z)-(d-2)\,z\,f(z)\right]
\]
is finite according to \cite[Appendix]{MR2143357} and depends only on the dimension $d$. It is indeed known that $f_\delta'=\delta\,f_{\delta-1}$, $f_\delta(z)\sim\Gamma(\delta+1)\,e^z$ as $z\to -\infty$ and $f_\delta(z)\sim u^{\delta+1}/(\delta+1)$ as $z\to +\infty$. From \eqref{Ineq:BoundForFermi}, we deduce that
\[
\frac{\alpha\,M^2}{|\partial \Omega|}\le\int_\Omega\big[2\,d\,\big(F(z)-F(\mu)\big)-(d-2)\,z\,f(z)\big]\,dx+(d-2)\int_\Omega \mu\,f(z)\;dx
\]
with $z:=u+\mu$. By dropping the term $F(\mu)$, we see that the first integral in the right hand side is bounded by $G_d\,|\Omega|$, and the second 
one by $(d-2)\mu\,M$. Since $f$ is increasing and $u$ positive, $f(\mu)\,|\Omega|\le\int_\Omega f(z)\,dx=M$ and therefore $\mu\le 
f^{-1}(M/|\Omega|))$. As 
a consequence, \eqref{Non-a-Mno} has no bounded solution if
\[
\alpha\,M^2>|\partial\Omega|\,\Big[\;G_d\,|\Omega|+(d-2)\,M\,f^{-1}\!\left(\tfrac M{|\Omega|}\right)\Big]\,.
\]
For a similar approach, one can refer to \cite{MR2136979}.
\end{enumerate}

\medskip Denote by $u_\mu$ a branch of solutions of \eqref{Eqn:AddLoc} satisfying \eqref{H1_}. For $\mu-\bar\mu>0$, small if $\bar\mu>-\infty$, or $\mu<0$, $|\mu|$ big enough if $\bar\mu=-\infty$, a branch of solutions of~\eqref{Non-a-Mno} can be parametrized~by $\mu\mapsto\left(\textstyle M(\mu):=\int_\Omega f(u_\mu+\mu)\,dx\,,\;u_\mu\right)$. Reciprocally, if $\Omega$ is bounded, then any solution $u\in L^\infty\cap H^1_0(\Omega)$ of~\eqref{Non-a-Mno} is of course a solution of \eqref{Eqn:AddLoc} with $\mu=\mu(M)$ determined by \eqref{Mno}. If $f$ is monotone increasing, we additionally know that $\bar\mu<\mu<f^{-1}(M/|\Omega|)$. To prove the uniqueness in $L^\infty\cap H^1_0(\Omega)$ of the solutions of~\eqref{Non-a-Mno}, it is therefore sufficient to establish the monotonicity of $\mu\mapsto M(\mu)$. Assume that
\be{Monotonicity-b}
\mbox{\em $\displaystyle\lim_{\mu\to\bar\mu}f(\mu)=\lim_{\mu\to\bar\mu}f'(\mu)=0$ and $f$ is monotone increasing on $(\bar\mu,\infty)$ .}
\ee
The function $v:=du_\mu/d\mu$ is a solution in $H^1_0(\Omega)$ of
\[
\Delta v+f'(u_\mu+\mu)\,(1+v)=0\;.
\]
As in the proof of Corollary \ref{Cor:Multi1}, by the Maximum Principle, $v$ is nonnegative when $\mu$ is in a right neighborhood of $\bar\mu$, thus proving that
\[
\frac{dM}{d\mu}=\int_\Omega f'(u_\mu+\mu)\,(1+v)\;dx
\]
is nonnegative. Using Theorem~\ref{Thm:Add}, we obtain the following result.
\begin{theorem}\label{Thm:Addi1} Assume that $\Omega$ is a bounded star-shaped domain in $\R^d$ with $C^{2,\gamma}$ boundary. If $f\in C^2$ is nonnegative, increasing, satisfies \eqref{Ineq:Poincare}, \eqref{Sc1}, \eqref{Cdt:AddLoc}, and \eqref{Monotonicity-b}, then there exists $M_0>0$ such that \eqref{Non-a-Mno} has at most one solution in $L^\infty\cap H^1_0(\Omega)$ for any $M\in(0,M_0)$.
\end{theorem}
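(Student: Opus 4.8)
The plan is to reduce the non-local additive problem~\eqref{Non-a-Mno} to the already-established local additive uniqueness result of Theorem~\ref{Thm:Add}, exactly as Corollary~\ref{Cor:Multi1} reduces \eqref{Eqn:MultNonLoc} to Theorem~\ref{Thm:Multi0}. First I would set up the correspondence between the two problems: any solution $u\in L^\infty\cap H^1_0(\Omega)$ of \eqref{Non-a-Mno} with parameter $M$ is a solution of the local problem \eqref{Eqn:AddLoc} with $\mu=\mu(M)$ determined by the constraint \eqref{Mno}; since $f$ is increasing and $u\ge 0$, one has $f(\mu)\,|\Omega|\le M$, hence $\bar\mu<\mu\le f^{-1}(M/|\Omega|)$. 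Because $\lim_{\mu\to\bar\mu}f(\mu)=0$, the bound $f(\mu)\le M/|\Omega|$ forces $\mu\to\bar\mu_+$ as $M\to 0_+$; so for $M$ small enough every solution of \eqref{Non-a-Mno} sits in the range $(\bar\mu,\mu_0)$ where Theorem~\ref{Thm:Add} guarantees uniqueness of the solution of \eqref{Eqn:AddLoc} for each fixed $\mu$. Thus two solutions of \eqref{Non-a-Mno} with the \emph{same} $M$ coincide if their associated values of $\mu$ coincide.

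The remaining point — and the one that does real work — is to rule out two solutions of \eqref{Non-a-Mno} with the same $M$ but \emph{different} values of $\mu$. For this I would show that the map $\mu\mapsto M(\mu):=\int_\Omega f(u_\mu+\mu)\,dx$ along the minimal branch $(\mu,u_\mu)$ satisfying \eqref{H1_} is strictly monotone for $\mu$ near $\bar\mu$, so that it is injective and its inverse $M\mapsto\mu(M)$ is well defined on a right neighbourhood of $0$. Differentiating the branch, $v:=du_\mu/d\mu$ solves $\Delta v+f'(u_\mu+\mu)\,(1+v)=0$ in $H^1_0(\Omega)$. Using \eqref{Monotonicity-b}: as $\mu\to\bar\mu_+$ the branch stays close to $0$ by \eqref{H1_} and $f'(u_\mu+\mu)\to 0$ uniformly, so $f'(u_\mu+\mu)<1/C_{\rm P}$; then $-\Delta v-f'(u_\mu+\mu)\,v=f'(u_\mu+\mu)\ge 0$ with the zeroth-order coefficient small enough that the Maximum Principle applies, giving $v\ge 0$. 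Consequently
\[
\frac{dM}{d\mu}=\int_\Omega f'(u_\mu+\mu)\,(1+v)\;dx\ge 0,
\]
and since $f'>0$ (as $f$ is increasing and $C^2$, $f'$ is positive at least on a subset of positive measure, or one invokes strict monotonicity of $f$ directly) the integral is strictly positive, so $M(\mu)$ is strictly increasing on $(\bar\mu,\mu_0)$ after possibly shrinking $\mu_0$.

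Finally I would assemble: choose $M_0>0$ small enough that (a) $f^{-1}(M_0/|\Omega|)<\mu_0$, so every solution of \eqref{Non-a-Mno} with $M<M_0$ has its parameter $\mu$ in $(\bar\mu,\mu_0)$, and (b) $M(\mu)<M_0$ forces $\mu$ in the interval where the monotonicity argument above is valid. Given two solutions $u_1,u_2$ of \eqref{Non-a-Mno} with the same $M\in(0,M_0)$, each is a solution of \eqref{Eqn:AddLoc} with parameters $\mu_1,\mu_2\in(\bar\mu,\mu_0)$ respectively, and $M(\mu_1)=M=M(\mu_2)$; strict monotonicity of $M(\cdot)$ gives $\mu_1=\mu_2=:\mu$, and then Theorem~\ref{Thm:Add} applied at that single value of $\mu$ gives $u_1=u_2$. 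The main obstacle is the sign/smallness control in the Maximum Principle step for $v$: one must justify carefully that along the minimal branch the linearized operator $-\Delta-f'(u_\mu+\mu)$ is coercive (equivalently that $f'(u_\mu+\mu)$ stays below the first eigenvalue $1/C_{\rm P}$), which is where \eqref{H1_} together with $\lim_{\mu\to\bar\mu}f'(\mu)=0$ from \eqref{Monotonicity-b} is essential — this is precisely the additive analogue of the estimate $\lambda_1 f'(u_1+\theta v)<1/C_{\rm P}$ used in the proof of Corollary~\ref{Cor:Multi1}.
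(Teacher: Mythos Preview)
Your proposal is correct and follows essentially the same approach as the paper: reduce to the local additive uniqueness of Theorem~\ref{Thm:Add} via the bound $\mu\le f^{-1}(M/|\Omega|)$, then establish monotonicity of $\mu\mapsto M(\mu)$ along the minimal branch by differentiating to get $\Delta v+f'(u_\mu+\mu)(1+v)=0$ and applying the Maximum Principle with the smallness of $f'(u_\mu+\mu)$ coming from \eqref{H1_} and \eqref{Monotonicity-b}. Your write-up is in fact somewhat more explicit than the paper's in assembling the final step and in justifying the coercivity of $-\Delta-f'(u_\mu+\mu)$.
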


\section{Concluding remarks}

Uniqueness issues in nonlinear elliptic problems are difficult questions when no symmetry assumption is made on the domain. In this paper we have considered only a few simple cases, which illustrate the efficiency of the approach based on Poho\v{z}aev's method when dealing with bifurcation problems. Our main contribution is to extend what has been done in the local multiplicative case to the additive case, and then to problem with non-local terms or constraints.

The key point is that Poho\v{z}aev's method, which is well known to provide non-existence results in supercritical problems, also gives uniqueness results. One can incidentally notice that non-existence results in many cases, for instance supercritical pure power law, are more precisely non-existence results of non-trivial solutions. The trivial solution is then the unique solution.

The strength of the method is that minimal geometrical assumptions have to be done, and the result holds true even if no symmetry can be expected. As a non-trivial byproduct of our results, when the domain $\Omega$ presents some special symmetry, for instance with respect to an hyperplane, then it follows from the uniqueness result that the solution also has the corresponding symmetry.

\medskip\begin{spacing}{0.8}\noindent\emph{Acknowledgments.\/} {\footnotesize J.D. thanks J.-P. Puel for explaining him the method in the local, multiplicative case, for the exponential nonlinearity, $f(u)=e^u$, and X. Cabr\'e for pointing to him \cite{MR1368678}. The authors thank a referee for pointing them an important missing assumption and M. Jakszto for pointing them references on the Poincar\'e / Friedrichs inequality. This work has been partially supported by the EU financed network HPRN-CT-2002-00282 and the Polonium contract no. 13886SG. It was also partially supported by the European Commission Marie Curie Host Fellowship for the Transfer of Knowledge ``Harmonic Analysis, Nonlinear Analysis and Probability'' MTKD-CT-2004-013389 and by the Polish Ministry of Science project N201 022 32/0902.}\end{spacing}

\smallskip\noindent{\scriptsize\copyright~2009 by the authors. This paper may be reproduced, in its entirety, for non-commercial purposes.}

\linespread{0.75}

\nocite*\bibliographystyle{siam}\bibliography{References}\end{document}